\documentclass[letterpaper, 10 pt, conference]{ieeeconf}
\IEEEoverridecommandlockouts
\usepackage{cite}
\usepackage{amsmath,amssymb,amsfonts}
\usepackage{algorithmic}
\usepackage{graphicx}
\usepackage{algorithm,algorithmic}
\usepackage{hyperref}
\hypersetup{hidelinks=true}
\usepackage{textcomp}
\newtheorem{lemma}{Lemma}
\newtheorem{definition}{Definition}
\newtheorem{theorem}{Theorem}
\newtheorem{proposition}{Proposition}
\newtheorem{assumption}{Assumption}
\newtheorem{remark}{Remark}

\newtheorem{corollary}{Corollary}
\usepackage{xcolor}
\usepackage{setspace}
\newcommand{\PLI}{P\L{}I}
\newcommand{\gPLI}{\textit{g}\PLI}

\newcommand{\kinfPLI}{$\mathcal{K}_\infty$-\PLI}

\newcommand{\SSpc}{X}
\newcommand{\re}[1][\empty]{\mathbb{R}^{#1}}
\newcommand{\loss}{f}
\newcommand{\mloss}{\loss^{\ast}}
\newcommand{\Z}{\mathcal{Z}}
\newcommand{\T}{\mathcal{T}}

\usepackage[normalem]{ulem} 
\newcommand{\MS}[1]{\textcolor{red}{#1}}

\usepackage{cuted} 
\def\BibTeX{{\rm B\kern-.05em{\sc i\kern-.025em b}\kern-.08em
    T\kern-.1667em\lower.7ex\hbox{E}\kern-.125emX}}
\begin{document}
\title{\LARGE \bf On incremental and semi-global exponential stability of gradient flows satisfying generalized Łojasiewicz inequalities}
\author{Andreas Oliveira, \IEEEmembership{Member, IEEE}, Arthur C. B. de Oliveira, \IEEEmembership{Member, IEEE}, \\ Mario Sznaier, \IEEEmembership{Fellow, IEEE} and Eduardo Sontag, \IEEEmembership{Fellow, IEEE}
\thanks{This work was partially supported by NSF grant CMMI 2208182, AFOSR grant FA9550-19-1-0005 and  ONR grant N00014-21-1-2431. The authors are with the ECE Department at Northeastern University, Boston, MA 02115. \textit{(Corresponding author: Andreas Oliveira, e-mail: franciscodemelooli.a@northeastern.edu).}}}

\maketitle

\begin{abstract}
The \L{}ojasiewicz inequality characterizes objective-value convergence along gradient flows and, in special cases, yields exponential decay of the cost. However, such results do not directly give rates of convergence in the state. In this paper, we use contraction theory to derive state-space guarantees for gradient systems satisfying generalized \L{}ojasiewicz inequalities. We first show that, when the objective has a unique strongly convex minimizer, the generalized \L{}ojasiewicz inequality implies semi-global exponential stability; on arbitrary compact subsets, this yields exponential stability. We then give two curvature-based sufficient conditions, together with constraints on the \L{}ojasiewicz rate, under which the nonconvex gradient flow is globally incrementally exponentially stable.
\end{abstract}

\begin{keywords}
gradient flows, \L{}ojasiewicz inequality, contractivity
\end{keywords}

\section{Introduction}\label{sec:introduction}

The Łojasiewicz inequality (LI) \cite{lojasiewicz1963propriete} is a powerful tool for relaxing convexity assumptions while proving convergence of gradient descent and gradient-flow dynamics. For real-analytic functions, Łojasiewicz showed that in a neighbourhood of any local minimizer the function satisfies a local Łojasiewicz inequality, establishing good convergence properties for gradient systems. This idea was later generalized via the Kurdyka–Łojasiewicz (KL) framework and used to establish convergence of a broad class of descent methods, including proximal algorithms and forward–backward splitting schemes \cite{attouch2013convergence}. In parallel, the machine learning literature on nonconvex optimization has embraced the Polyak–Łojasiewicz inequality (\PLI) as a tractable condition for establishing global convergence, optimality, and linear convergence rates for first-order methods \cite{karimi2016linear}. 

Even though the \PLI\ provides a natural Lyapunov function and yields exponential convergence of gradient flows in terms of the objective value, it is generally unclear whether this can be extended to convergence of the state to a minimizer. This implication would be immediate if the objective were uniformly comparable to powers of the Euclidean norm. As shown in \cite{karimi2016linear}, one direction can be guaranteed under a global PL condition: the objective upper bounds a quadratic function of the distance to the minimizer. However, the corresponding two-sided comparability is false in general. Still, the one-sided bound is valuable, as it already implies a form of semi-global exponential stability in the state. In this paper, we leverage contraction theory together with generalized \L{}ojasiewcz-type inequalities to obtain comparable, and in some cases stronger, state-space convergence guarantees.

Contractivity can, in some ways, be viewed as a stronger property than stability. Informally, contractive systems are those for which a suitably defined distance between different trajectories tends to zero exponentially fast, regardless of the initial conditions. Classical notions of contractivity based on logarithmic norms have been widely applied to chemical reaction networks, certain classes of partial differential equations, among others \cite{soderlind2006logarithmic,Sontag-Aminzare}. Despite its broad applicability, we highlight in this work that logarithmic norms are insufficient to capture the contractivity of gradient flows where the cost function satisfies a \PLI.

Instead, we turn to a complementary notion, introduced in \cite{LOHMILLER1998683} based on Riemannian metrics, which studies the exponential stability of the variational dynamics of a nonlinear system and provides conditions under which an appropriately chosen Riemannian distance strictly contracts along the flow. This framework is quite general \cite{GIESLConverse} and has been applied to the analysis of learning algorithms, geodesic convexity, and even data-driven control \cite{kozachkov2022generalizationsupervisedlearningriemannian,Wensing_2020, oliveira2025DDC}.

The main contributions of this work are as follows:

\begin{enumerate}
    \item Inspired by \cite{GIESLConverse}, we first construct a Riemannian metric that establishes global contractivity of the gradient flow of a nonconvex function satisfying the generalized \L{}ojasiewicz-type inequality introduced in \cite{Arthur-PLI}, under the assumption that the function admits a unique locally strongly convex minimizer. This yields semi-global exponential stability of the equilibrium point \(x^\ast\). By restricting the construction to arbitrary compact sets, one obtains a Riemannian metric certifying that \(x^\ast\) is exponentially stable on each such set. Both results go beyond Lyapunov-based certificates, which in general guarantee only convergence in the cost.
   \item The global contractivity result above does not, by itself, imply that the equilibrium point $x^{\ast}$ is globally exponentially stable. To recover global exponential stability of $x^{\ast}$, we derive two sufficient conditions based on the curvature of the cost function and the rate associated with the Łojasiewicz-type inequality. Under these conditions, we construct globally and uniformly bounded Riemannian metrics that contract along the flow. This implies the gradient system is globally incrementally exponentially stable and hence $x^{\ast}$ is globally exponentially stable.
   \item To validate the generalized Łojasiewicz inequality, we exhibit a function that satisfies only a generalized Łojasiewicz condition from \cite{Arthur-PLI}  and whose gradient flow is nevertheless globally contractive.
\end{enumerate}

\section{Preliminaries}

\subsection{Stability Notions}
Consider the nonlinear ordinary differential equation (ODE):
\begin{equation}\label{eq:ode}
    \dot{x} = g(x),
\end{equation}
where $g:\mathbb{R}^n \to \mathbb{R}^n$ is continuously differentiable. We utilize the definitions of (global) exponential stability as given in \cite{khalil2002nonlinear}. We say that an equilibrium $x^{\ast}$ is \emph{semi-globally exponentially stable} (SGES) if there exists a continuous function $h:\mathbb{R}^n \rightarrow \mathbb{R}^{+}$, and rate $\kappa >0$ such that for any initial condition $x_0 \in \mathbb{R}^n$ the corresponding solution, denoted by $\varphi(t,x_0)$, satisfies:
\begin{equation}
    \|\varphi(t,x_0) - x^\ast\| \;\leq\; h(x_0) e^{-\kappa t},
    \qquad \forall t \geq 0.
\end{equation}
\begin{definition}
System \eqref{eq:ode} is said to be \emph{incrementally exponentially stable} (IES) on a forward invariant set $K \subseteq \mathbb{R}^n$ if there exists a rate $\kappa$ and overshoot $c>0$ such that for any two initial conditions $x_0,y_0 \in K$ the corresponding solutions $\varphi(t,x_0)$, $\varphi(t,y_0)$ of \eqref{eq:ode} satisfy:
\begin{equation}
\label{eq:ies}
    \|\varphi(t,x_0)-\varphi(t,y_0)\|
    \;\le\; c\,e^{-\kappa t}\,\|x_0-y_0\|,
    \, \forall\, t\ge 0.
\end{equation}
\end{definition}

Incremental exponential stability of \eqref{eq:ode} implies the existence of a global exponentially stable equilibrium point, but the converse is false.

\subsection{Gradient flows and the generalized Polyak-\L{}ojasiewicz}

Let $\loss:\SSpc\to\re$ be a proper, twice continuously differentiable function. Assume $\loss$ is bounded below and that it attains its minimum at some point in $\SSpc$, and consider the following optimization problem
\begin{equation}
        \label{eq:def-optprob}
        \underset{x\in\SSpc}{\textrm{minimize}}\quad\quad f(x),
\end{equation}
to be solved by finding the equilibrium points of the gradient flow 
\begin{equation}\label{eq:gradient-flow}
    \dot x =-\nabla f(x).
\end{equation}
It is well known that if $\loss$ is proper with a well-defined minimum value, then trajectories of the gradient flow will be pre-compact (see \cite{Arthur-PLI} for a brief overview). If we further assume that $\loss$ is real-analytic, then by \L{}ojasiewicz Theorem we know that every trajectory converges to a unique critical point of $f$ and that trajectories have finite length.

Although these results guarantee convergence of trajectories to the set of critical points, $\Z:=\{x\in\SSpc~|~\nabla\loss(x)=0\}$, there is no guarantee that they will converge to the set of global minimizers of $\loss$, $\T:=\{x\in\SSpc~|~f(x)=\min_{\SSpc}f\}$. A common assumption in the nonconvex optimization literature is to require $\loss$ to satisfy a Polyak-\L{}ojasiewicz inequality, with the recent works \cite{Arthur-PLI,cui2025small,cui2024small} proposing a generalized framework for the standard inequality as follows\MS{:}
\begin{definition}[Generalized Polyak-\L{}ojasiewicz inequality]
    \label{def:genPLI}
    A function $\loss$ is said to satisfy a \emph{generalized} Polyak-\L{}ojasiewicz inequality if there exists some continuous function $\alpha:\re[+]\to\re[+]$ satisfying $\alpha(r)=0\iff r=0$ for which 
    \begin{equation}
        \label{eq:def-genPLI}
        \|\nabla \loss(x)\|\geq\alpha(\loss(x)-\mloss)
    \end{equation}
    holds for all $x\in\SSpc$ with $\mloss:=\min_{x\in\SSpc}\loss(x)$.
\end{definition}

Further assumptions can be imposed on $\alpha$ to characterize specific robustness and convergence guarantees. For example, if $\loss$ satisfies a generalized \PLI\ with 
\begin{equation}
    \label{eq:def-gPLI}
    \alpha(r)=\sqrt{\mu r}
\end{equation}
for some $\mu>0$, we recover the traditional \PLI\ definition, which we will refer to as a \emph{global} \PLI\ (\gPLI). It can be shown that the cost $\loss$ will converge exponentially to its minimum value along a gradient-flow solution if and only if it satisfies a $\gPLI$. Alternatively, it is easy to verify that no further assumption on $\alpha$ is necessary to guarantee asymptotic convergence of $\loss$.

In this paper, we are interested in characterizing under which conditions the gradient flow associated with a loss function $\loss$ is contracting and when that implies the system is IES or merely SGES. To achieve this, we will assume the following of $\alpha$.
\begin{definition}
    \label{def:Kinf}
    A function $\alpha:\re[+]\to\re[+]$ is of class $\mathcal{K}_\infty$ if
    \begin{itemize}
        \item $\alpha(r)=0\iff r=0$;
        \item $\alpha(r_1)<\alpha(r_2)$ for all $r_2>r_1\ge0$;
        \item $\lim_{r\to\infty}\alpha(r)=\infty$.
    \end{itemize}
\end{definition}

If a function $\loss$ satisfies a generalized \PLI\ with $\alpha$ of class $\mathcal{K}_\infty$ we say that it satisfies a \kinfPLI.

\subsection{Contractivity Under Logarithmic Norms}

In this section, we introduce the class of logarithmic norms, define contraction with respect to such functionals, and show that this class of metrics is too restrictive to prove contractivity of gradient flows satisfying a \kinfPLI.

\begin{definition}{\cite{desoer2009feedback}}
Let $\|\cdot\|$ denote a norm on $\mathbb{R}^n$, and let the same symbol denote its induced matrix norm on $\mathbb{R}^{n \times n}$. The \emph{logarithmic norm} of a matrix $A \in \mathbb{R}^{n \times n}$ associated with $\|\cdot\|$ is defined by
\begin{equation}
    \mu(A) := \lim_{h \to 0^+} \frac{\|I + hA\| - 1}{h}.
\end{equation}
\end{definition}

Unless otherwise specified, $\|\cdot\|=\|\cdot\|_2$. We denote by $\mu_p$ the logarithmic norm induced by the $\|\cdot\|_p$, and by $\mu_{p,Q}$ the logarithmic norm induced by the weighted norm $\|x\|_{p,Q}:=\|Qx\|_p$.

Now consider the ODE \eqref{eq:ode}, with Jacobian denoted by $\nabla g := \frac{\partial g}{\partial x}$. The notion of contraction based on logarithmic norms and presented in \cite{contractive-with-inputs-sontag} is defined via the Jacobian $\nabla g$ and presented below:

\begin{definition}{\cite{contractive-with-inputs-sontag}}
The system \eqref{eq:ode} is said to be \emph{infinitesimally contracting} in a set $G \subset \mathbb{R}^n$, if there exists some norm $\|\cdot \|$, with associated logarithmic norm $\mu$,  such that, for some constant $\nu > 0$ (the \emph{contraction rate}), it holds that:
\begin{equation}\label{def:infinitesimally-contracting}
    \mu(\nabla g(x)) \leq -\nu, \quad \forall x \in G.
\end{equation}
\end{definition}

\begin{remark}
    Informally, the key consequence of infinitesimally contracting systems is that if trajectories are forward invariant in a convex set $G$, then the resulting system is IES with overshoot $c = 1$ in the set $G$ \cite{contractive-with-inputs-sontag}.
\end{remark}

Now we introduce an interesting connection between the logarithmic norm, the spectral abscissa and the induced matrix norm.

\begin{lemma}{\cite{desoer2009feedback}}\label{lemma:tightness-sysmmetric}
Let $A \in \mathbb{R}^{n \times n}$ and let $\|\cdot\|$ be any norm with associated logarithmic norm $\mu(\cdot)$. If $\alpha(A) = \max\{\Re(\lambda) : \lambda \text{ is an eigenvalue of } A\}$ denotes the spectral abscissa, then
\begin{equation}
-\|A\| \;\le\; -\mu(-A) \;\le\; \Re(\lambda) \;\le\; \alpha(A) \;\le\; \mu(A) \;\le\; \|A\|.
\end{equation}

Further if $A \in \mathbb{R}^{n \times n}$ is a symmetric matrix then $\mu_2(A) = \lambda_{\max}(A)$.
\end{lemma}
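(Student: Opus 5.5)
The plan is to prove the chain from the outside in, using only the definition of the induced norm and the limit defining $\mu$.

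\textbf{Outer bounds.} For any $h>0$ the triangle inequality gives $\|I+hA\|\le 1+h\|A\|$. Hence $(\|I+hA\|-1)/h\le \|A\|$, and letting $h\to0^+$ yields $\mu(A)\le\|A\|$. The left end of the chain then follows by applying this to $-A$, which gives $\mu(-A)\le\|-A\|=\|A\|$, i.e. $-\|A\|\le-\mu(-A)$.

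\textbf{Eigenvalue bound.} I will show $\Re(\lambda)\le\mu(A)$ for every eigenvalue $\lambda$. Since $\|\cdot\|$ is a norm on $\mathbb{R}^n$ and $\lambda$ may be complex, I would avoid complex eigenvectors and argue through the semigroup. The first step is the standard estimate $\|e^{At}\|\le e^{\mu(A)t}$ for $t\ge0$. Set $\phi(t)=\|e^{At}\|$. Then
\begin{equation*}
\phi(t+h)\le\|I+hA\|\,\phi(t)+o(h),
\end{equation*}
so the upper right Dini derivative satisfies $D^+\phi\le\mu(A)\phi$, and a comparison (Grönwall) argument gives the estimate. Next, by the spectral radius formula,
\begin{equation*}
e^{\Re(\lambda)t}\le\rho(e^{At})\le\|e^{At}\|\le e^{\mu(A)t},
\end{equation*}
where the middle inequality holds because the spectral radius is bounded by any induced norm (on $\mathbb{R}^n$ one uses Gelfand's formula, so real-versus-complex issues disappear). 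Taking logarithms and dividing by $t$ gives $\Re(\lambda)\le\mu(A)$, and maximizing over $\lambda$ gives $\alpha(A)\le\mu(A)$. The inequality $\Re(\lambda)\le\alpha(A)$ is just the definition of $\alpha(A)$. Applying the bound $\Re\le\mu$ to $-A$, whose eigenvalues are $-\lambda$, gives $-\Re(\lambda)\le\mu(-A)$, which completes the chain.

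\textbf{Symmetric case.} Write $A=U\Lambda U^\top$ with $U$ orthogonal. Then $\|I+hA\|_2=\max_i|1+h\lambda_i|=1+h\lambda_{\max}$ for small $h>0$. The limit is therefore exactly $\lambda_{\max}(A)$.

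\textbf{Main obstacle.} The expected difficulty is the existence of the limit defining $\mu$ and the Dini-derivative comparison step. For existence, I would note that $h\mapsto\|I+hA\|$ is convex, so the difference quotient is monotone nonincreasing as $h\downarrow0$ and bounded below by $-\|A\|$; the limit therefore exists. The Dini comparison then follows from the standard scalar comparison lemma.
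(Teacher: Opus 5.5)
The paper gives no proof of this lemma; it only cites \cite{desoer2009feedback}, so your argument has to stand on its own, and it does. The following steps are all correct:
\begin{itemize}
\item the outer bounds from $\|I+hA\|\le 1+h\|A\|$;
\item existence of the limit, via convexity of $h\mapsto\|I+hA\|$ together with the lower bound $-\|A\|$;
\item the left half of the chain, obtained by applying the right half to $-A$;
\item the symmetric case, via $\|I+hA\|_2=\max_i|1+h\lambda_i|=1+h\lambda_{\max}(A)$ for small $h>0$.
\end{itemize}

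Where you depart from the standard textbook route is the eigenvalue bound. The usual argument takes an eigenvector, $(I+hA)v=(1+h\lambda)v$, and reads off $|1+h\lambda|\le\|I+hA\|$. You rightly note that a complex $v$ does not live in the real normed space, and you repair this by going through $\|e^{At}\|\le e^{\mu(A)t}$ and Gelfand's formula. That works, but the semigroup detour is unnecessary. Your own tool applies directly to $I+hA$, which has eigenvalue $1+h\lambda$, giving $|1+h\lambda|\le\rho(I+hA)\le\|I+hA\|$. Since $|1+h\lambda|=1+h\Re(\lambda)+O(h^2)$, subtracting $1$, dividing by $h$ and letting $h\to0^+$ yields $\Re(\lambda)\le\mu(A)$ in one line. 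This removes the Dini-derivative comparison you flagged as the main obstacle.

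What your longer route buys is the estimate $\|e^{At}\|\le e^{\mu(A)t}$. This is the constant-coefficient case of the Coppel inequality that the paper invokes without proof in Lemma~\ref{lemma:forward-invariance-of-inf-contracting-ball} and Theorem~\ref{thm:ies-inf-contracting-compact}. Your Dini argument carries over to $\dot w=\nabla g(\psi(t,r))\,w$ with only notational changes, so it is worth keeping if you want those appendix steps to be self-contained.
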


This result is particularly useful in the context of this paper, since for gradient systems, the Jacobian of the vector-field is the negation of the Hessian of the cost, which is always symmetric. This allows one to prove the following statement about contractivity of gradient flows with respect to logarithmic norms:

\begin{proposition}
    The system \eqref{eq:gradient-flow} is infinitesimally contracting in $\mathbb{R}^n$ with contraction rate $\nu$ if and only if $f$ is a strongly convex.
\end{proposition}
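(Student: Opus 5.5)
The plan is to translate both directions into a uniform lower bound on the Hessian. The Jacobian of the vector field in \eqref{eq:gradient-flow} is $\nabla g(x) = -\nabla^2 f(x)$, which is symmetric for every $x$ because $f$ is $C^2$. By the second-order characterization, a $C^2$ function $f$ is $\nu$-strongly convex if and only if $\nabla^2 f(x) \succeq \nu I$ for all $x\in\mathbb{R}^n$. So it suffices to show that infinitesimal contraction with rate $\nu$ in \emph{some} norm is equivalent to $\lambda_{\min}(\nabla^2 f(x)) \ge \nu$ for all $x$. The strong convexity modulus is then matched with the contraction rate $\nu$.

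($\Leftarrow$) Assume $f$ is $\nu$-strongly convex, so $\nabla^2 f(x)\succeq \nu I$ everywhere. I would pick the Euclidean norm. By the symmetric case of Lemma~\ref{lemma:tightness-sysmmetric},
\[
\mu_2(-\nabla^2 f(x)) = \lambda_{\max}(-\nabla^2 f(x)) = -\lambda_{\min}(\nabla^2 f(x)) \le -\nu
\]
for all $x$. This is exactly \eqref{def:infinitesimally-contracting} with $G=\mathbb{R}^n$.

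($\Rightarrow$) Assume there is some norm $\|\cdot\|$, with logarithmic norm $\mu$, such that $\mu(-\nabla^2 f(x))\le-\nu$ for all $x$. The norm need not be Euclidean, so I would not use the equality $\mu_2 = \lambda_{\max}$. Instead I would use the norm-independent chain of Lemma~\ref{lemma:tightness-sysmmetric}, which gives $\alpha(A)\le\mu(A)$. Apply it with $A=-\nabla^2 f(x)$. Since $A$ is symmetric, its eigenvalues are real and $\alpha(A) = \lambda_{\max}(-\nabla^2 f(x)) = -\lambda_{\min}(\nabla^2 f(x))$. Hence
\[
-\lambda_{\min}(\nabla^2 f(x)) \le \mu(-\nabla^2 f(x)) \le -\nu ,
\]
that is, $\nabla^2 f(x)\succeq \nu I$ for every $x$. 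To conclude strong convexity I would invoke Taylor's formula with integral remainder:
\[
f(y) = f(x) + \nabla f(x)^\top (y-x) + \int_0^1 (1-s)\,(y-x)^\top \nabla^2 f(x+s(y-x))\,(y-x)\,ds .
\]
Bounding the integrand below by $\nu\|y-x\|_2^2$ gives $f(y)\ge f(x)+\nabla f(x)^\top(y-x)+\tfrac{\nu}{2}\|y-x\|_2^2$.

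The main point to handle carefully is the ($\Rightarrow$) direction. There the contraction certificate may come from an arbitrary, possibly non-Euclidean norm, and the proof relies on the spectral-abscissa lower bound $\alpha(A)\le\mu(A)$ holding for every induced logarithmic norm. Symmetry of the Hessian is what makes $\alpha$ coincide with $-\lambda_{\min}$. I would also note explicitly that the bound is uniform in $x$, which is what yields \emph{global} strong convexity rather than local convexity. The remaining steps are routine.
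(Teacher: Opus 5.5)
Your proposal is correct and follows essentially the same route as the paper. For ($\Leftarrow$) you use the Euclidean log-norm identity $\mu_2(A)=\lambda_{\max}(A)$ for symmetric $A$, and for ($\Rightarrow$) the norm-independent bound $\alpha(A)\le\mu(A)$ together with symmetry of the Hessian. The only difference is that you prove the step $\nabla^2 f\succeq \nu I \Rightarrow$ strong convexity directly via Taylor's formula with integral remainder, whereas the paper cites Nesterov's second-order characterization (Theorem 2.1.11).
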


\begin{proof}
    First assume the system is infinitesimally contracting with rate $\nu$ for some norm $\|\cdot\|$. By Lemma \ref{lemma:tightness-sysmmetric}, $\alpha(-\nabla ^2 f(x)) \le \mu(-\nabla ^2 f(x)) \le -\nu$. From the definition of $\alpha(\cdot)$ and the previous bound we obtain $\alpha(-\nabla^2 f) = -\lambda_{\min}(\nabla^2 f(x)) \le -\nu.$ Hence by Theorem 2.1.11 in \cite{nesterov2013} this implies $f$ is strongly convex.
    Now assume $f$ is strongly convex, again by Theorem 2.1.11 we obtain $-\lambda_{\min}(\nabla^2 f(x)) \le -\nu \Rightarrow \alpha(-\nabla^2 f(x)) \le -\nu$.
    For the gradient flow system where $f \in C^2$, the hessian $\nabla^2 f := [\frac{\partial^2 f}{\partial x_i \partial x_j}]_{i,j}$ is always symmetric so by Lemma \ref{lemma:tightness-sysmmetric} $\mu_2(-\nabla^2 f) =\alpha(-\nabla^2 f) \le -\nu$ and the system is infinitesimally contracting in the logarithmic norm induced by $\|\cdot\|_2$. 
\end{proof}

\begin{remark}    
    Because a \gPLI\ system can be concave on certain subsets, there can exist points $x$ where $\lambda_{\min}(\nabla^{2} f(x)) < -\beta < 0$. Hence $-\lambda_{\min}(\nabla^{2} f(x)) = \mu_{2}(-\nabla^{2} f(x)) > \beta > 0$, showing that such systems cannot, in general, be \emph{infinitesimally contracting} under logarithmic norms.
\end{remark}

\textbf{Example:}

Consider $f(x) = \frac{1}{2}x^2 - 2 \cos(x)$, this function is verified numerically to satisfy a $\frac{1}{4}-$\gPLI\ condition with unique equilibrium at the origin but $\nabla^2 f(x) = 1 + 2\cos(x) < 0$ in a set of unbounded measure.

\subsection{Contractivity under Riemannian Metrics}

The limitations of logarithmic norms for proving contractivity of gradient flows when the cost satisfies a \gPLI\ motivate us to instead consider the class of Riemannian metrics.

\begin{definition}{\cite{lee2018introduction}}
Let $G$ be an open, path-connected subset of $\mathbb{R}^n$.  A \emph{Riemannian metric} is a function $M \in C^{1}(G,\mathbb{S}^n)$ such that $M(x)$ is positive definite for all $x \in G$.  For $v,w \in \mathbb{R}^n$, the bilinear form $v^T M(x) w$ defines an inner product at $x$.  

We define the orbital derivative of $M$ as:
\begin{equation}
\dot M(x) := \left.\frac{d}{dt} M(\varphi(t,x))\right|_{t=0}.
\end{equation}
If $M \in C^1(G,\mathbb{S}^n)$, then
\begin{equation}
\dot M_{ij}(x) = \nabla M_{ij}(x) \cdot g(x), \quad i,j=1,\dots,n.
\end{equation}
\end{definition}

\begin{definition}
Let $M$ be a Riemannian metric and let
\[
\begin{aligned}
\Gamma(x_0,x_1)
:=\Big\{\gamma:[0,1]\to G \ \text{piecewise }C^1 \ \big|\ 
&\ \gamma(0)=x_0,\\
&\ \gamma(1)=x_1\Big\}.
\end{aligned}
\]
The (Riemannian) distance between $x_0,x_1\in G$ induced by $M$ is
\begin{equation}\label{eq:riem_distance}
\begin{aligned}
d_M(x_0,x_1)
:=\inf_{\gamma\in\Gamma(x_0,x_1)} \int_{0}^{1} (\frac{\partial}{\partial s} \gamma(s)^{\top} M(\gamma(s))\frac{\partial}{\partial s}\gamma(s))^{\frac{1}{2}}ds.
\end{aligned}
\end{equation}
\end{definition}

\begin{definition}{\cite{GIESLConverse}}\label{def:contraction-riemannian}
Let $M$ be a Riemannian metric. For $v \in \mathbb{R}^n$, define
\begin{equation}
L_M(x;v) := \tfrac{1}{2} v^{\top} \big(M(x) \nabla g + \nabla g^{\top} M(x) + \dot M(x)\big) v.
\end{equation}
Then set $\mathcal{L}_M(x) := \max_{v^T M(x) v = 1} L_M(x;v).$
We call $M$ a \emph{contraction metric} on $G$ with rate $-\nu < 0$ if
\begin{equation}\label{eq:contractivity-of-M}
\mathcal{L}_M(x) \le -\nu, \quad \forall x \in G.
\end{equation}

If $G = \mathbb{R}^n$ then we say $M$ is a \emph{global contraction metric}.
\end{definition}

\begin{remark}\label{rm:lyapunov-equation}
Condition \eqref{eq:contractivity-of-M} is equivalent to the matrix inequality,
\begin{equation}
M(x) \nabla g + \nabla g^{\top} M(x) + \dot M(x) \preceq -2\nu M(x), \quad \forall x \in G,
\end{equation}
where $\preceq$ denotes negative semi-definiteness. If $M(x) = Q \in \mathbb{S}^n$ 
by Lemma 2.16 in \cite{FB-CTDS} this is equivalent to $\mu_{2,P}(\nabla g) \leq -\nu$ where $P^2 = Q$.
\end{remark}
\begin{remark}
    Much like the case of contractive systems with respect to logarithmic norms, proving $M$ is a contraction metric on a forward invariant open path-connected set $G$ shows that \cite{aghannan2003intrinsic}: 
    \begin{equation}
    \label{eq:ARestimate}
    \displaystyle d_{M}\big(\varphi(x_0,t),\varphi(x_1,t)\big) \;\le\; e^{-\nu t}\, d_{M}(x_0,x_1).
    \end{equation}
\end{remark}

In general, if $M$ is a global contraction metric, estimate \eqref{eq:ARestimate} does not allow for a direct translation into estimates in the $\ell_2$ norm. The following proposition shows when $\eqref{eq:ARestimate}$ can be translated into $\ell_2$ norm bounds.

\begin{proposition}\label{prop:ies}
    Assume there exists $\alpha,\beta> 0$ such that $\beta I \preceq M(x) \preceq \alpha I$ and $M$ is a global contraction metric, then for any $x_0,x_1 \in \mathbb{R}^n$:
    \begin{equation}\label{eq:ies}
        \|\varphi(x_0,t) - \varphi(x_1,t)\|_2 \leq e^{-\nu t} \big(\frac{\alpha}{\beta}\big)^{\frac{1}{2}} \|x_0-x_1\|_2.
    \end{equation}
\end{proposition}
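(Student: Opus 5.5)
The plan is to sandwich the Riemannian distance $d_M$ between multiples of the Euclidean distance, and then combine those bounds with the contraction estimate \eqref{eq:ARestimate}.

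\textbf{Step 1 (Euclidean comparison of $d_M$).} For any piecewise $C^1$ curve $\gamma\in\Gamma(x_0,x_1)$, the bounds $\beta I\preceq M\preceq \alpha I$ give, pointwise in $s$,
\[
\sqrt{\beta}\,\|\gamma'(s)\|_2\le \big(\gamma'(s)^\top M(\gamma(s))\gamma'(s)\big)^{1/2}\le \sqrt{\alpha}\,\|\gamma'(s)\|_2 .
\]
\emph{Upper bound.} Take the straight segment $\gamma(s)=x_0+s(x_1-x_0)$, which is admissible because $G=\mathbb{R}^n$. This yields $d_M(x_0,x_1)\le\sqrt{\alpha}\,\|x_1-x_0\|_2$.

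\emph{Lower bound.} For an arbitrary $\gamma$, the integral is at least $\sqrt{\beta}\int_0^1\|\gamma'\|_2\,ds\ge\sqrt{\beta}\,\|x_1-x_0\|_2$, since Euclidean arc length dominates the chord. Taking the infimum over $\gamma$ gives $d_M(x_0,x_1)\ge\sqrt{\beta}\,\|x_1-x_0\|_2$.

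\textbf{Step 2 (chaining).} Since $M$ is a global contraction metric on $\mathbb{R}^n$, which is open, path-connected and forward invariant, the remark preceding the proposition gives \eqref{eq:ARestimate}. Combining it with Step 1:
\[
\sqrt{\beta}\,\|\varphi(x_0,t)-\varphi(x_1,t)\|_2\le d_M\big(\varphi(x_0,t),\varphi(x_1,t)\big)\le e^{-\nu t}d_M(x_0,x_1)\le e^{-\nu t}\sqrt{\alpha}\,\|x_0-x_1\|_2 .
\]
Dividing by $\sqrt{\beta}$ gives the claim.

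\textbf{Main obstacle.} The only delicate point is that \eqref{eq:ARestimate} requires solutions to exist for all $t\ge0$, so that $\mathbb{R}^n$ is genuinely forward invariant. If this is not already implicit in the setting, I would justify it as follows.

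Let $\delta x$ solve the variational equation along a trajectory. The contraction inequality of Remark~\ref{rm:lyapunov-equation} gives $\frac{d}{dt}\big(\delta x^\top M\,\delta x\big)\le-2\nu\,\delta x^\top M\,\delta x$.

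The uniform bounds on $M$ then give that $\|\delta x\|$ stays bounded. One would then argue via the reference trajectory from a fixed point that trajectories cannot escape in finite time. Alternatively, one can simply work on the maximal interval of existence, where the estimate holds verbatim.

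Otherwise the argument is just the two-sided comparison above. The essential use of the uniform bounds is that $\alpha$ and $\beta$ are constant, so the constants do not depend on the region visited by the curves.
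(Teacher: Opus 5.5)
Your proposal is correct and follows the paper's proof essentially verbatim: the paper also sandwiches $d_M$ between $\beta^{1/2}\|x_0-x_1\|_2$ (Euclidean arc length dominates the chord) and $\alpha^{1/2}\|x_0-x_1\|_2$ (straight-segment bound), then chains these bounds with \eqref{eq:ARestimate}. Your forward-completeness aside goes beyond the paper, which simply takes \eqref{eq:ARestimate} on $\mathbb{R}^n$ as given; if you keep it, the cleanest version is to take $\delta x=\dot x=g(x(t))$, whose exponential decay in the $M$-norm bounds the speed and so rules out finite escape time.
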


\begin{proof}
We will first prove the lower bound on $d_{M}\big(x_0,x_1\big)$. From the definition of distance, and by the fact that $v^TM(x)v \geq \beta\|v\|_2^2$, we observe that
\begin{equation}
\begin{aligned}
    & d_{M}\big(x_0,x_1\big) = 
    \inf_{\gamma\in\Gamma(x_0,x_1)} \int_{0}^{1} (\frac{\partial}{\partial s} \gamma(s)^{\top} M(\gamma(s))\frac{\partial}{\partial s}\gamma(s))^{\frac{1}{2}}ds \\
    & \geq \inf_{\gamma\in\Gamma(x_0,x_1)} \beta^{\frac{1}{2}} \int_{0}^1 (\frac{\partial}{\partial s} \gamma(s)^T\frac{\partial}{\partial s}\gamma(s))^{\frac{1}{2}} ds
     = \beta^{\frac{1}{2}} \|x_0 - x_1\|_2.
\end{aligned}
\end{equation}
where the last inequality is obtained by the fact that for $M$ a constant matrix, the geodesic is a straight line. Similarly $d_{M}\big(x_0,x_1\big) \leq \alpha^{\frac{1}{2}} \|x_0 - x_1\|_2$ and combining the two inequalities with \eqref{eq:ARestimate} proves \eqref{eq:ies}.
\end{proof}

Thus, establishing IES for a gradient flow reduces to constructing a global contraction metric $M$ that is \emph{uniformly bounded above and below}. Likewise, SGES follows if one can construct a global contraction metric that is \emph{uniformly bounded below.}

Lastly, we present an important lemma that will be used later to construct our Riemannian metrics. We omit the proof and defer the reader to Lemma 2.7 of \cite{GIESLConverse}.

\begin{lemma}{\cite{GIESLConverse}}\label{lemma:riemmanina-metric-with-lyapunov}
Let $N: G \to S^n$ be a Riemannian metric and $V: G \to \mathbb{R}$ be continuous and orbitally continuously differentiable.  
Then
\begin{equation}
M(x) := e^{2V(x)} N(x)
\end{equation}
is also a Riemannian metric, and
\begin{equation}
\mathcal{L}_M(x) = \mathcal{L}_N(x) + \dot V(x).
\end{equation}
\end{lemma}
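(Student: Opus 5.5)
The plan is a direct computation. I would take the definition of $\mathcal{L}_M$ in Definition~\ref{def:contraction-riemannian}, substitute $M=e^{2V}N$, and compare the result with $\mathcal{L}_N$.

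\textbf{Step 1: $M$ is a metric.} $M$ is symmetric, and it is positive definite because $e^{2V(x)}>0$ and $N(x)\succ0$. Only orbital differentiability is needed to form $\dot M$. The product rule along trajectories, using that $V$ is orbitally $C^1$, gives
\[
\dot M(x)=2\dot V(x)\,e^{2V(x)}N(x)+e^{2V(x)}\dot N(x).
\]
So $M$ qualifies as a Riemannian metric in the sense needed for the contraction quantities. If strict $C^1$ regularity is required, I would remark that $V$ can be taken smooth in applications.

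\textbf{Step 2: substitute.} For any $v$, inserting $\dot M$ into $L_M$ gives
\[
L_M(x;v)=e^{2V}\Big[\tfrac12 v^\top\big(N\nabla g+\nabla g^\top N+\dot N\big)v+\dot V\,v^\top N v\Big]=e^{2V}\big[L_N(x;v)+\dot V(x)\,v^\top N(x)v\big].
\]

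\textbf{Step 3: match the normalizations.} The constraint $v^\top M v=1$ is the same as $v^\top N v=e^{-2V}$. Set $w=e^{V}v$, so that $w^\top N w=1$. Since $L_N(x;\cdot)$ is quadratic, $L_N(x;v)=e^{-2V}L_N(x;w)$, and therefore
\[
L_M(x;v)=L_N(x;w)+\dot V(x).
\]
The map $v\mapsto e^{V}v$ is a bijection between the unit spheres of $M(x)$ and $N(x)$. Taking the maximum over this sphere therefore gives $\mathcal{L}_M(x)=\mathcal{L}_N(x)+\dot V(x)$.

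\textbf{Main obstacle.} The only subtle point is the normalization in Step 3. Without the rescaling one would wrongly pick up a factor $e^{2V}$. The quadratic homogeneity of $L_N$ in $v$ removes it exactly. The rest is bookkeeping, plus the regularity caveat noted in Step 1.
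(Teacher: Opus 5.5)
Your proof is correct. It is essentially the argument of Lemma~2.7 in \cite{GIESLConverse}, to which the paper defers without giving its own proof: the product rule for $\dot M$, factoring out $e^{2V}$, and the rescaling $w=e^{V}v$ between the unit spheres of $M(x)$ and $N(x)$. Your regularity caveat is also well placed, since under the paper's own $C^1$ definition of a Riemannian metric the claim that $e^{2V}N$ is a metric needs $V\in C^1$, whereas your computation of $\mathcal{L}_M$ only uses orbital differentiability.
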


\section{\kinfPLI\ Functions and Contraction Metrics}

For this paper, we will consider the optimization problem given in \eqref{eq:def-optprob} where the cost $\loss$ is proper, twice continuously differentiable, has a unique global minimizer at which the function is locally strongly convex, and satisfies a \kinfPLI. In this setting, it is known that the loss converges to the global minimizer exponentially fast along the flow if and only if $\loss$ satisfies a \gPLI. In contrast, if $\loss$ fails to satisfy a \gPLI\ but does satisfy a \kinfPLI, one can typically only guarantee asymptotic convergence of the cost.

The purpose of this section is to go beyond the convergence of the cost and establish convergence guarantees in the state. We begin by showing that if $\loss$ satisfies a \kinfPLI\ and is locally strongly convex around the minimizer, then the convergence of the objective along the flow can always be strengthened to SGES of the minimizer. When the feasible set is additionally compact, we can further show that the minimizer is exponentially stable in said set. Lastly, we provide sufficient conditions, based on \kinfPLI-type upper bounds and curvature assumptions on $\loss$, under which the SGES property of the solution can be upgraded to IES. These results depart from classical converse global contraction-metric theorems \cite{GIESLConverse} and Lyapunov-based approaches \cite{khalil2002nonlinear} in that the contraction metrics we construct do not depend on the unknown solutions of the gradient-flow dynamics, but instead are expressed directly in terms of the cost function.

\subsection{Semi-global exponential stability and Exponential Stability on Compacts}

\begin{assumption}\label{assump:1}
The function $\loss$ admits a unique global minimizer $x^{\ast}$ where it is strongly convex, hence there exists $\nu > 0$ such that $\lambda_{\min}(\nabla^2 f(x^{\ast})) \ge \nu$. Further, $f$ satisfies a \kinfPLI.
\end{assumption}

\begin{theorem}\label{prop:global}
Suppose $\loss$ satisfies Assumption \ref{assump:1}. Then for any $\epsilon \in (0,\nu)$ there exists a uniformly lower bounded Riemannian metric $\hat{M} \in C^{1}(\mathbb{R}^n, \mathbb{S}^{n})$ such that
\begin{equation}\label{eq:globalness}
\begin{aligned}
    \mathcal{L}_{\hat{M}}(x) \;\le\; -\nu + \epsilon, \; \forall x \in \mathbb{R}^n.
\end{aligned}
\end{equation}
Hence, the system is \emph{semi-globally exponentially stable} with rate $\nu-\epsilon$.
\end{theorem}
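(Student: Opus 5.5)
We will build $\hat M$ as in Lemma \ref{lemma:riemmanina-metric-with-lyapunov}, taking $\hat M(x)=e^{2V(x)}I$. The base metric is the constant metric $N=I$, and $V$ is a scalar function built from $\loss$. For $N=I$ and $g=-\nabla\loss$ we have $\mathcal{L}_I(x)=\lambda_{\max}(-\nabla^2\loss(x))=-\lambda_{\min}(\nabla^2\loss(x))$. The lemma then gives
\[
\mathcal{L}_{\hat M}(x)=-\lambda_{\min}(\nabla^2\loss(x))+\dot V(x),
\qquad \dot V(x)=-\nabla V(x)^\top\nabla\loss(x).
\]
The task is therefore to choose $V$ so that $\dot V$ compensates for the lack of convexity away from $x^\ast$. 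Near $x^\ast$ the Hessian already gives rate $\nu-\epsilon$, so $V$ should do nothing there.

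\textbf{Step 1 (local region).} Fix $\epsilon\in(0,\nu)$. By continuity of $\nabla^2\loss$ there is $\delta>0$ with $\lambda_{\min}(\nabla^2\loss(x))\ge\nu-\epsilon$ on the ball $B_\delta(x^\ast)$. Since $x^\ast$ is the unique minimizer and $\loss$ is proper, there is a level $c>0$ with $\{\loss-\mloss< c\}\subset B_\delta(x^\ast)$. To see this, suppose not. Then one gets a sequence $x_k\notin B_\delta(x^\ast)$ with $\loss(x_k)\to\mloss$. Properness keeps the sequence bounded, so a subsequence converges to a minimizer outside the ball, which contradicts uniqueness.

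\textbf{Step 2 (choice of $V$).} Let $V(x)=\psi(\loss(x)-\mloss)$, where $\psi\in C^1(\mathbb{R}_+)$ is nondecreasing, $\psi\equiv0$ on $[0,c/2]$, and $\psi\ge0$. Then
\[
\dot V(x)=-\psi'(\loss(x)-\mloss)\,\|\nabla\loss(x)\|^2\le -\psi'(r)\,\alpha(r)^2,\qquad r=\loss(x)-\mloss,
\]
by the \kinfPLI. Define the Hessian deficit
\[
\beta(r):=\max\Big\{0,\ \sup_{\loss(x)-\mloss\le r}\big(\nu-\epsilon+\lambda_{\max}(-\nabla^2\loss(x))\big)\Big\}.
\]
This is finite because sublevel sets are compact. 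It is nondecreasing, and it vanishes for $r\le c$. Then choose $\psi'$ continuous with
\[
\psi'(r)\ge \beta(r)/\alpha(r)^2 \quad\text{for } r\ge c/2 .
\]
This is possible because $\alpha(r)\ge\alpha(c/2)>0$ there, by monotonicity, and because $\beta$ is bounded on compact intervals. Since $\beta$ need only be majorized, a continuous upper envelope is used. With this choice, for $r\ge c/2$,
\[
\mathcal{L}_{\hat M}\le -\lambda_{\min}(\nabla^2\loss)-\beta(r)\le-(\nu-\epsilon).
\]
For $r<c/2$ we have $\dot V=0$ and $x\in B_\delta(x^\ast)$, which gives the same bound. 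Because $\psi\ge0$, we get $\hat M\succeq I$, so $\hat M$ is uniformly bounded below. Also $\hat M$ is $C^1$, since $\loss\in C^2$ and $\psi\in C^1$.

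\textbf{Step 3 (SGES).} Apply the estimate \eqref{eq:ARestimate} with $x_1=x^\ast$, which is an equilibrium. The lower bound $\hat M\succeq I$ gives $\|\varphi(t,x_0)-x^\ast\|\le d_{\hat M}(\varphi(t,x_0),x^\ast)\le e^{-(\nu-\epsilon)t}d_{\hat M}(x_0,x^\ast)$, as in the proof of Proposition \ref{prop:ies}. Now take $h(x_0):=d_{\hat M}(x_0,x^\ast)$. It is bounded by the length of the straight segment, $\int_0^1 e^{V(x^\ast+s(x_0-x^\ast))}\|x_0-x^\ast\|\,ds$, so $h$ is finite. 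It is continuous because $d_{\hat M}$ is a continuous distance.

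\textbf{Main obstacle.} The delicate point is Step 2: making $\psi'$ continuous while it dominates the possibly discontinuous function $\beta/\alpha^2$. We handle this by replacing $\beta$ with a continuous nondecreasing majorant, for instance $\bar\beta(r)=\frac{1}{r}\int_r^{2r}\beta$ plus a linear term, and we use a smooth cutoff to make $\psi$ vanish near zero. This is also exactly why only a lower bound on $\hat M$ is available. The function $\psi$ may grow without bound, so $\hat M$ need not be bounded above, and we obtain SGES rather than IES.
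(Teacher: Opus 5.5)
Your proposal is correct and is essentially the paper's construction. You use a conformal metric $\hat M=e^{2\psi(\loss(x)-\mloss)}I$ whose exponent is flat near $x^\ast$ and whose derivative dominates the Hessian deficit divided by $\alpha^2$, then apply Lemma \ref{lemma:riemmanina-metric-with-lyapunov}, the bound $\hat M\succeq I$, and $h(x_0)=d_{\hat M}(x_0,x^\ast)$ for SGES. Your variations are all valid and tidy up steps the paper leaves implicit: you take the supremum over sublevel sets, so $\beta$ is monotone and your averaged majorant is automatically continuous, which replaces the paper's upper-semicontinuity argument for $m^\ast$. You also build the smooth cutoff directly into $\psi$ instead of invoking Lemma \ref{lemma:partition-of-metrics}, and you explicitly prove that a small sublevel set lies in the strongly convex ball.
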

The proof is provided in the appendix; however, we discuss the idea next. Notice that locally around $x^*$, Lemma \ref{lemma:tightness-sysmmetric} and Remark \ref{rm:lyapunov-equation}, show that the metric $M(x) = I$ suffices to produce $\mathcal{L}_{I}(x) \le -\nu + \epsilon$ for x in a sufficiently small neighborhood of $x^{\ast}$ and in fact it is uniformly negative around any point where $\loss$ is locally strongly convex. Further, inspired by the application of Lemma \ref{lemma:riemmanina-metric-with-lyapunov} in \cite{GIESLConverse}, given that the cost yields a natural Lyapunov function for the gradient flow, we can expect to construct a metric that satisfies \eqref{eq:globalness} globally. One challenge lies in interpolating between a local constant metric $I$, used where $\loss$ is strongly convex, and a state-dependent metric $M(x) = e^{v(f(x))}I$ where $\loss$ isn't. 

Furthermore, notice that the semi-global exponential convergence result in Theorem \ref{prop:global} is purely asymptotic, since the value of the overshoot is unbounded in general. Despite that, we can state the following simple but key result from Theorem \ref{prop:global}:

\begin{corollary}\label{cor:compact}
Suppose $\loss$ satisfies Assumption \ref{assump:1}. Then given any compact, forward invariant neighborhood $K \subset \mathbb{R}^n$ of $x^{\ast}$, and any $\epsilon \in (0,\nu)$, the gradient flow system is \emph{incrementally exponentially stable} with rate $\nu -\epsilon$ in $K$ and $x^{\ast}$ is exponentially stable in $K$.
\end{corollary}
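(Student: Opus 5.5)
We derive the corollary directly from Theorem \ref{prop:global}, using compactness of $K$ to obtain the missing upper bound on the metric. Fix $\epsilon\in(0,\nu)$ and let $\hat M\in C^1(\mathbb{R}^n,\mathbb{S}^n)$ be the metric provided by Theorem \ref{prop:global}. It satisfies $\mathcal{L}_{\hat M}(x)\le -(\nu-\epsilon)$ for all $x$, and $\hat M(x)\succeq \beta I$ for some $\beta>0$. Since $\hat M$ is continuous and $K$ is compact, $\alpha_K:=\max_{x\in K}\lambda_{\max}(\hat M(x))<\infty$. Hence $\beta I\preceq \hat M(x)\preceq \alpha_K I$ on $K$, and we are in the setting of Proposition \ref{prop:ies}, restricted to $K$.

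The contraction estimate \eqref{eq:ARestimate} holds globally, because $\hat M$ is a global contraction metric on $G=\mathbb{R}^n$, which is trivially forward invariant, open and path connected. So for $x_0,x_1\in K$ we have
\[
d_{\hat M}(\varphi(t,x_0),\varphi(t,x_1))\le e^{-(\nu-\epsilon)t}d_{\hat M}(x_0,x_1).
\]
The lower bound $d_{\hat M}(y_0,y_1)\ge \beta^{1/2}\|y_0-y_1\|$ holds for all points, as in the proof of Proposition \ref{prop:ies}, since it only uses $\hat M\succeq\beta I$ along arbitrary curves. We apply it with $y_i=\varphi(t,x_i)$, which lie in $K$ by forward invariance, although the lower bound does not even need this.

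\textbf{Main obstacle: the upper bound on $d_{\hat M}(x_0,x_1)$.} To get $d_{\hat M}(x_0,x_1)\le \alpha_K^{1/2}\|x_0-x_1\|$ we need the straight segment $[x_0,x_1]$ to stay in a region where $\hat M\preceq\alpha I$. If $K$ is convex, the segment lies in $K$ and we are done. Otherwise, we replace $K$ by its convex hull $\mathrm{co}(K)$, which is still compact, and set $\alpha:=\max_{\mathrm{co}(K)}\lambda_{\max}(\hat M)$. Evaluating the length functional on the straight segment gives the bound with this $\alpha$. Forward invariance of $\mathrm{co}(K)$ is not needed, because the contraction inequality is global and the upper bound is used only at $t=0$. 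Combining the two bounds gives
\[
\|\varphi(t,x_0)-\varphi(t,x_1)\|\le (\alpha/\beta)^{1/2}e^{-(\nu-\epsilon)t}\|x_0-x_1\|,
\]
which is IES on $K$ with rate $\nu-\epsilon$ and overshoot $c=(\alpha/\beta)^{1/2}$.

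Finally, $x^\ast$ is an equilibrium and lies in $K$, since $K$ is a neighborhood of $x^\ast$. Taking $x_1=x^\ast$, so that $\varphi(t,x^\ast)=x^\ast$, yields
\[
\|\varphi(t,x_0)-x^\ast\|\le c\,e^{-(\nu-\epsilon)t}\|x_0-x^\ast\|
\]
for all $x_0\in K$. This is exponential stability of $x^\ast$ on $K$, with overshoot uniform over $K$, in contrast to the unbounded $h(x_0)$ of the semi-global statement.
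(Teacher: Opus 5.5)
Your proof is correct and takes essentially the paper's approach: restrict the global metric $\hat M$ from Theorem~\ref{prop:global} to $K$, use compactness to bound it above, apply the two-sided distance comparison of Proposition~\ref{prop:ies} to get IES on $K$, then set $x_1=x^\ast$ for exponential stability. Your convex-hull bound on $d_{\hat M}(x_0,x_1)$, together with the observation that the contraction estimate is global so forward invariance of $K$ is not actually needed, fills in a step the paper skips when it invokes Proposition~\ref{prop:ies} on a possibly nonconvex $K$.
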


The proof is given in the appendix. The main takeaway, however, is that establishing SGES for gradient systems satisfying Assumption \ref{assump:1} provides an alternative way to show that gradient systems which are locally exponentially stable and globally asymptotically stable are, in fact, exponentially stable on arbitrary compact sets. 

\subsection{Global Incremental Exponential Stability}

We now show how to construct uniformly bounded metrics that guarantee global contractivity under various \kinfPLI\ bounds and different degrees of concavity. For simplicity, we do not require the metric to be $\epsilon$-optimal, although the same argument could be adapted to establish that property as well. The key properties that will allow the two results relate to bounding how concave a function is allowed to be in some sense. We formalize this intuition through two different definitions.

\begin{definition}
    A function $\loss:\re[n]\to\re$ is $m$-\emph{state-bounded concave} for some $m>0$ if the set $S = \{ x \quad | \quad\lambda_{\min}(\nabla^2 f(x)) < m \}$ is bounded.
\end{definition}

\begin{definition}
    A function $\loss:\re[n]\to\re$ is $m$-\emph{magnitude-bounded concave} for some $m>0$ if $\lambda_{\min}(\nabla^2\loss(x))>-m$ for all $x\in\re[n]$.
\end{definition}

With these definitions, we can state the next result:

\begin{theorem}\label{prop:concavity-on-annulus}
    Suppose $f$ satisfies Assumption \ref{assump:1}. Further assume $\loss$ is $m$-state-bounded concave for some $m>0$. Then the gradient flow is \emph{globally incrementally exponentially stable} and the equilibrium $x^{\ast}$ is globally exponentially stable.
\end{theorem}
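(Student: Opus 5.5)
The plan is to build a metric of the form $M(x)=e^{2V(x)}I$ with $V(x)=v(\loss(x)-\mloss)$ for a smooth scalar $v$. By Lemma \ref{lemma:riemmanina-metric-with-lyapunov} with $N=I$, we have $\mathcal{L}_M(x)=\mathcal{L}_I(x)+\dot V(x)$. Here $\mathcal{L}_I(x)=-\lambda_{\min}(\nabla^2\loss(x))$ by Lemma \ref{lemma:tightness-sysmmetric} and Remark \ref{rm:lyapunov-equation}, and $\dot V(x)=-v'(\loss(x)-\mloss)\|\nabla\loss(x)\|^2$. The goal is to choose $v$ nondecreasing and \emph{bounded} with $v(0)=0$, so that $I\preceq M(x)\preceq e^{2\sup v}I$. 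Proposition \ref{prop:ies} then gives global IES. Global exponential stability of $x^\ast$ follows by taking $x_1=x^\ast$, which is an equilibrium.

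\textbf{Step 1: partition $\re[n]$ into three regions.} By $m$-state-bounded concavity, the set $S$ is bounded, so $S\subseteq B_R(x^\ast)$ for some $R$. Outside $B_R$ we have $\lambda_{\min}(\nabla^2\loss)\ge m$, so $\mathcal{L}_I\le -m$ there, and any $v'\ge0$ keeps this. Near $x^\ast$, Assumption \ref{assump:1} and continuity of $\nabla^2\loss$ give a ball $B_\delta(x^\ast)$ on which $\lambda_{\min}(\nabla^2\loss)\ge\nu/2$. The remaining region is the compact annulus $A=\overline{B_R}\setminus B_\delta$. On $A$, the Hessian term is bounded: $\mathcal{L}_I\le c:=\max_A(-\lambda_{\min}(\nabla^2\loss))$, which may be positive.

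\textbf{Step 2: control $\dot V$ on the annulus.} Uniqueness of the minimizer gives $\loss-\mloss\ge r_0>0$ on $A$, since $A$ is compact and excludes $x^\ast$. Since $\loss$ is continuous on the compact set $A$, we also have $\loss-\mloss\le r_1$ there. The \kinfPLI\ then yields
\begin{equation*}
\|\nabla\loss(x)\|^2\ge\alpha(r_0)^2>0 \quad\text{on } A.
\end{equation*}
Choose $v$ smooth, nondecreasing, with $v\equiv0$ on $[0,r_0/2]$, $v'\ge (c+1)/\alpha(r_0)^2$ on $[r_0,r_1]$, and $v$ constant for $r\ge r_1+1$.

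\textbf{Step 3: check the contraction rate everywhere.} On $A$ this choice gives $\mathcal{L}_M\le c-(c+1)=-1$. Inside $B_\delta$, any point with $\loss-\mloss>r_0/2$ still has $v'\ge0$, so $\mathcal{L}_M\le-\nu/2$ throughout $B_\delta$. Outside $B_R$, $\mathcal{L}_M\le-m$. Hence $\mathcal{L}_M\le-\min\{1,\nu/2,m\}<0$ on all of $\re[n]$. The metric is $C^1$ because $\loss\in C^2$ and $v$ is smooth. It is bounded because $v$ is eventually constant.

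\textbf{Main obstacle.} The delicate point is the interplay of the level sets of $\loss$ with the geometric regions. The positivity $v'>0$ must hold on the \emph{values} $\loss(A)\subseteq[r_0,r_1]$. Points outside $B_R$ can also have values in this range, but there $v'\ge0$ is harmless. Likewise, points of $B_\delta$ with large $\loss$ are harmless, since $v'\ge0$ only helps. So the only real issue is that $\loss(A)$ is bounded away from $0$, which uses uniqueness of the minimizer and compactness of $A$; and that the gradient is bounded below on $A$, which uses the \kinfPLI. Properness of $\loss$ is not even needed for boundedness here, since $v$ is flattened beyond $r_1$. I would double-check that a nondecreasing $v$ with the required slope on $[r_0,r_1]$ is finite: it is, with total increase about $(c+1)(r_1-r_0)/\alpha(r_0)^2$. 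The upper bound on $M$ is therefore uniform.
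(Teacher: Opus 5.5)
Your proof is correct and follows essentially the same strategy as the paper. Both use a conformal metric $e^{2v(\loss(x))}I$ with nondecreasing exponent. The \kinfPLI\ lower bound on $\|\nabla\loss\|$ away from $x^\ast$ dominates the possibly negative curvature on a bounded region containing $S$, and the exponent is flattened to a constant where $\lambda_{\min}(\nabla^2\loss)\ge m$, so that Lemma~\ref{lemma:riemmanina-metric-with-lyapunov} and Proposition~\ref{prop:ies} give global IES.

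The only difference is bookkeeping. The paper gets contraction on a compact sublevel set $X\supseteq S$ from the unproved Remark~\ref{rm:restricted-form-M}, using the weight $e^{2c\loss}$, and then patches with Lemma~\ref{lemma:partition-of-metrics}. Your ball/annulus decomposition and single bump choice of $v'$ do the same thing explicitly and self-containedly.
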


The proof is provided in the appendix, but the main idea is to construct a globally uniformly bounded contraction metric. The key additional point beyond Theorem~\ref{prop:global} is that the set where $\loss$ fails to be strongly convex is bounded. Hence the contraction metric only needs to be state-dependent on a bounded region; outside it, one smoothly patches it to a constant metric while preserving the contraction inequality. As a result, the metric is uniformly bounded above and below, and Proposition~\ref{prop:ies} yields global IES. Figure~\ref{fig:annulus_concavity} illustrates this regime.

\begin{figure}[t]
    \centering
    \includegraphics[width=\columnwidth]{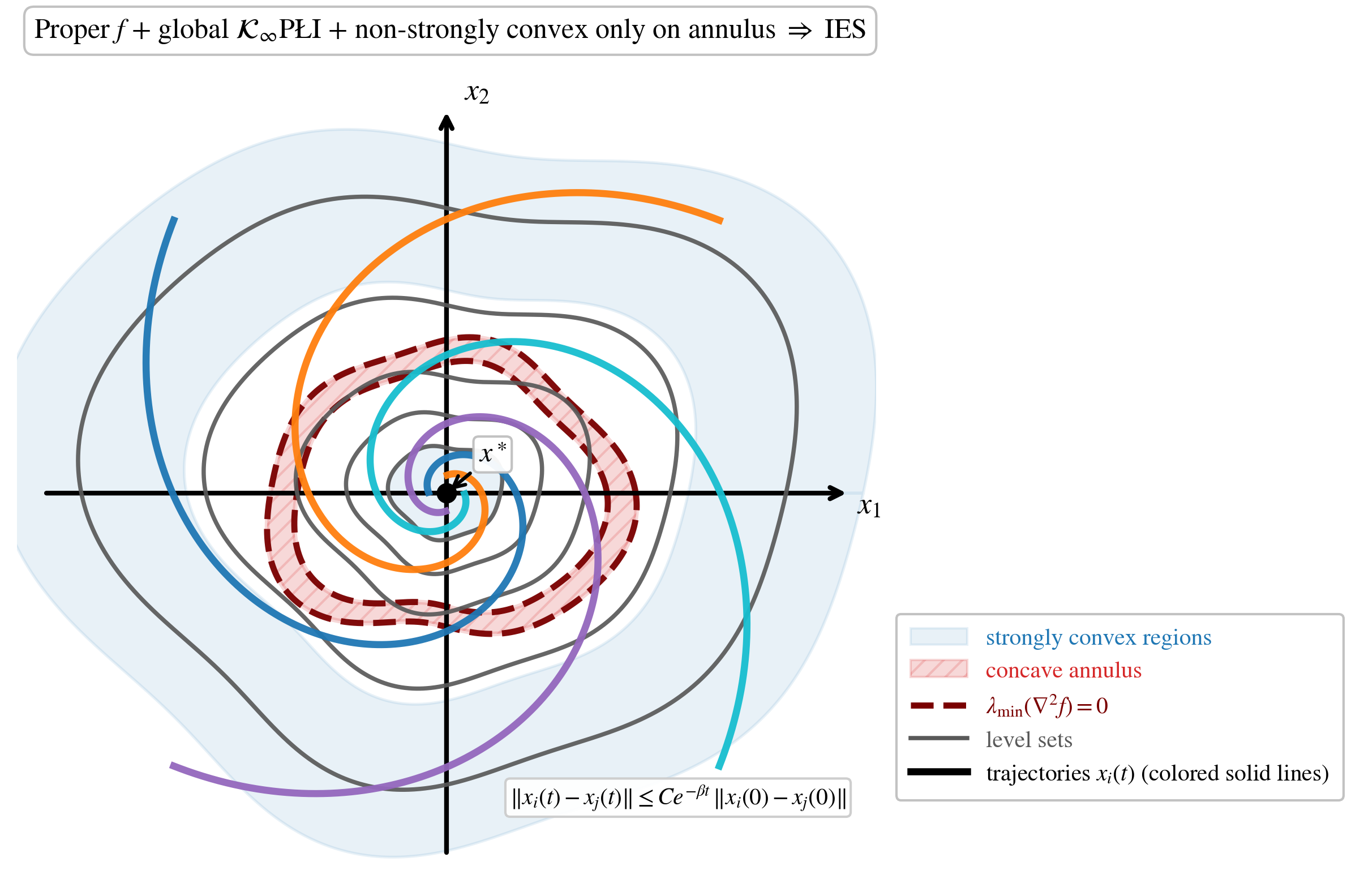}
    \caption{Schematic illustration of the assumptions and conclusion of Theorem~\ref{prop:concavity-on-annulus}. A proper objective $\loss$ with satisfying \kinfPLI. The equilibrium $x^\ast$ lies in a strongly convex neighborhood, and everything outside an annulus is strongly convex. A bounded region where the function is strongly concave is allowed $\lambda_{\min}(\nabla^2 f(x))<0$ and is shown in red. Gradient flow trajectories $\dot{x}=-\nabla f(x)$ starting in different quadrants tend exponentially toward each other since the system is IES by Theorem \ref{prop:concavity-on-annulus}.}
    \label{fig:annulus_concavity}
\end{figure}

Next, we look at magnitude-bounded concave functions, for which the following class of generalized \PLI\ is useful 

\begin{definition}
    A function $\loss$ is said to satisfy a \emph{strong}-\PLI\ if it satisfies \eqref{eq:def-genPLI} for some comparison function $\alpha$ that is of class $\mathcal{K}_\infty$ and for any $s_0 > \mloss$
    \begin{equation}\label{eq:strong-PLI}
        \int_{s_0}^\infty \frac{1}{\alpha(s - \mloss)^{2}} \; ds<\infty.
    \end{equation}
\end{definition}
From this, we present the following result: 

\begin{theorem}\label{prop:above-PL-regime}
Suppose $f$ satisfies Assumption \ref{assump:1}, where the class \kinfPLI\ is in fact a strong-\PLI. Further assume $\loss$ is $m$-magnitude-bounded concave for some $m>0$. Then the gradient flow is \emph{globally incrementally exponentially stable} and the equilibrium $x^{\ast}$ is globally exponentially stable.
\end{theorem}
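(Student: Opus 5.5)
We will construct a conformal metric $M(x)=e^{2V(x)}I$ with $V(x)=v(\loss(x)-\mloss)$ for a scalar function $v$. We then show that it is a global contraction metric that is uniformly bounded above and below, and conclude with Proposition~\ref{prop:ies}. By Lemma~\ref{lemma:riemmanina-metric-with-lyapunov} and Lemma~\ref{lemma:tightness-sysmmetric},
\[
\mathcal{L}_M(x)=\mathcal{L}_I(x)+\dot V(x)=\lambda_{\max}(-\nabla^2\loss(x)) - v'(\loss(x)-\mloss)\,\|\nabla\loss(x)\|^2 .
\]
Magnitude-bounded concavity gives $\lambda_{\max}(-\nabla^2 \loss(x))< m$ everywhere. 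The \kinfPLI\ then gives $\dot V(x)\le -v'(r)\alpha(r)^2$ with $r=\loss(x)-\mloss$, provided $v'\ge 0$.

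First, use Assumption~\ref{assump:1}. By continuity of $\nabla^2\loss$ there is a neighborhood of $x^\ast$ on which $\lambda_{\min}(\nabla^2\loss)\ge\nu/2$. Since $\loss$ is proper with a unique minimizer, this neighborhood contains a sublevel set $\{x:\loss(x)-\mloss< r_0\}$ for some $r_0>0$; properness implies that small sublevel sets shrink to $x^\ast$. On this sublevel set we take $v\equiv 0$, which gives $\mathcal{L}_M\le-\nu/2$.

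For $r\ge r_0$ we need $v'(r)\alpha(r)^2\ge m+\nu/2$. The natural choice is $v'(r)=c\,\alpha(r)^{-2}$ for $r\ge r_1>r_0$, with $c=m+\nu/2$. On the transition interval $[r_0,r_1]$ we take $v'=\phi(r)\,c\,\alpha(r)^{-2}$, where $\phi$ is a smooth cutoff rising from $0$ to $1$.

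The main obstacle is the transition layer, where $\phi<1$ and the decay from $\dot V$ does not yet dominate. The remedy is to shrink $r_0$ so that the whole transition sits inside the strongly convex region. Concretely, pick $r_1$ with $\{r<r_1\}$ inside the region where $\lambda_{\min}\ge\nu/2$, and put the cutoff on $[r_1/2,r_1]$. There $\mathcal{L}_I\le-\nu/2$ already and $\dot V\le0$, so the bound holds. For $r\ge r_1$ we have $\mathcal{L}_M\le m - c=-\nu/2$. Smoothness of $v$ makes $M\in C^1$, since $\alpha$ is only continuous but $v$ is an integral of continuous functions. If $C^1$ regularity of $M$ in $x$ is required, one can replace $\alpha^{-2}$ by a smooth minorant-compatible function, or simply note that Lemma~\ref{lemma:riemmanina-metric-with-lyapunov} only needs $V$ orbitally $C^1$, which holds because $v\in C^1$ and $\loss\in C^2$.

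Finally, the bounds. Since $v\ge0$ we have $M\succeq I$. The upper bound is
\[
v(r)\le c\int_{r_1/2}^{\infty}\alpha(s)^{-2}\,ds<\infty,
\]
which is finite exactly because of the strong-\PLI\ condition \eqref{eq:strong-PLI}; this is where that hypothesis is essential. Hence $I\preceq M(x)\preceq e^{2\sup v}I$, and $M$ is a global contraction metric with rate $\nu/2$. Proposition~\ref{prop:ies} gives global IES, and taking $x_1=x^\ast$ (an equilibrium) yields global exponential stability of $x^\ast$.
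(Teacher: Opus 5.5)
Your proposal is correct and follows essentially the paper's route. Both use a conformal metric $e^{2v(f(x)-f^\ast)}I$ with $v'\propto\alpha^{-2}$ outside a small sublevel set, use magnitude-bounded concavity together with the \kinfPLI\ to dominate $-\lambda_{\min}(\nabla^2 f)$, and use the strong-\PLI\ integrability to get the uniform upper bound. The only difference is that you build the cutoff directly into $v'$ inside the strongly convex sublevel set, which is what the paper obtains by invoking Lemma~\ref{lemma:partition-of-metrics}; doing it inline makes $C^1$ regularity and $M\succeq I$ immediate.
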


The proof is given in the appendix and follows the same construction as in Theorem~\ref{prop:global}. The new issue is that $\loss$ may remain concave on unbounded sets, so the metric must stay state-dependent at infinity. The strong-\PLI\ assumption is used to control this growth and ensure that the resulting metric remains uniformly bounded. Proposition~\ref{prop:ies} then implies global IES. Figure~\ref{fig:prop_integrability_envelope} illustrates this case.

\begin{figure}[t]
    \centering
    \includegraphics[width=\columnwidth]{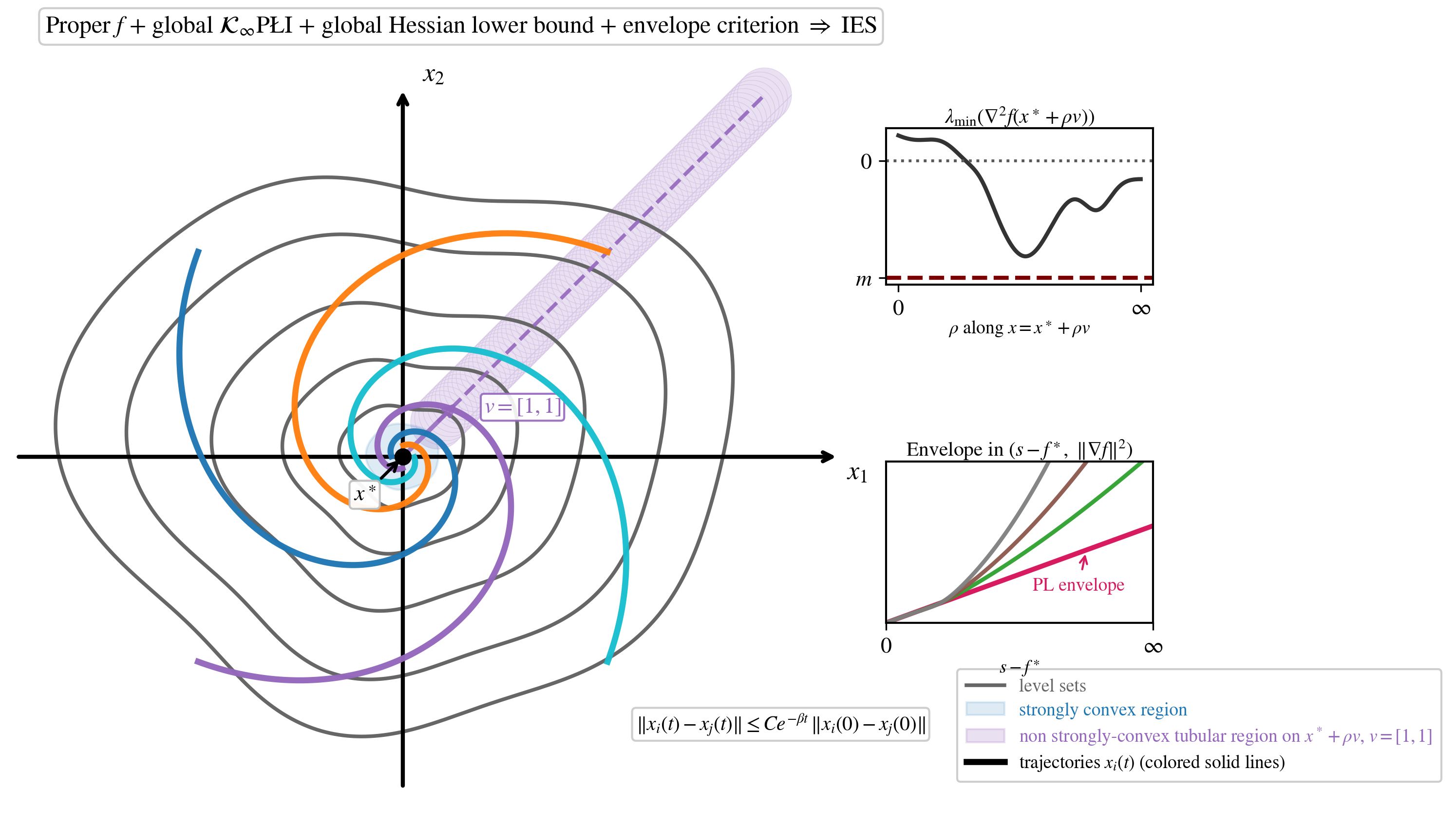}
    \caption{Schematic illustration of the assumptions and conclusion of Theorem \ref{prop:above-PL-regime}. The equilibrium $x^\ast$ lies in a strongly convex neighborhood (shaded blue region). A proper objective $f$ satisfying a \kinfPLI\ that is strictly stronger than the \PLI\ is shown in the lower right graph, and a global Hessian spectral abscissa lower bound (which need not be positive) is shown in the top right graph. Gradient flow trajectories $\dot{x}=-\nabla f(x)$ starting in different quadrants tend exponentially toward each other since the system is IES by Theorem \ref{prop:above-PL-regime}.}
    \label{fig:prop_integrability_envelope}
\end{figure}

\subsection{Discussion}

The existence of a global contraction metric under a \kinfPLI\ bound that cannot be reduced to a \PLI\ type may appear unintuitive. Indeed, one might expect that exponential convergence in some metric should be reflected by the objective value itself, viewed as a natural Lyapunov function. To clarify this apparent mismatch between contraction metrics and Lyapunov decrease in the cost, we point to~\cite{forni2015differential}, which shows that a contraction metric exists on any basin of attraction of a stable hyperbolic equilibrium. 
The key ingredient in that result is the Hartman--Grobman theorem, and the resulting metric is therefore not given by an explicit construction. From this perspective, the present paper can be viewed as a concrete case study that develops constructive routes to global contraction metrics starting from globally asymptotically stable gradient systems.

As noted in the logarithmic norms section, it is generally impossible to prove that a gradient system satisfying a \gPLI\ is infinitesimally contracting. While generally true, the setting of Theorem \ref{prop:concavity-on-annulus} is more structured: the system is infinitesimally contracting everywhere except on a compact annulus around the global minimizer, and all trajectories converge asymptotically to that minimizer. This makes it possible to use logarithmic norms to show that the system is IES even though it is not infinitesimally contracting. One can generalize this beyond gradient systems: any globally asymptotically stable system that is infinitesimally contracting everywhere except on a compact annulus around the equilibrium is IES. We refer the reader to Theorem \ref{thm:ies-inf-contracting-compact} in the Appendix for the formal statement and proof.

\section{Example}
Consider the function:
\begin{equation}
    f(x) = x\log(x + \sqrt{1 + x^2}) - \sqrt{1 + x^2} + 1.
\end{equation} Since $ \log(x + \sqrt{1 + x^2}),\sqrt{1 + x^2}$ are $C^{\infty}$, $f$ is $C^{\infty}$. Computing:

\begin{equation}
    \nabla f(x) = \log(x + \sqrt{1 + x^2}), \quad \nabla^2 f(x) = \frac{1}{\sqrt{1 + x^2}}.
\end{equation}

we observe that global minimizer is at $x = 0$ and the function is locally strongly convex around $0$. Now we find its global \kinfPLI\ bound. Assume $f$ satisfies a \kinfPLI\ with $\alpha(s) = (cs)^{\frac{1}{p}}$, then:

\begin{equation}
    \frac{|\nabla f(x)|^p}{f(x)} = \frac{[\log(x + \sqrt{1 + x^2})]^p}{f(x)} \ge c.
\end{equation}

In the limit as $x \to \infty$, $\nabla f(x) \sim \log(2 x)$ and $f(x) \sim x\log(2x)$, hence we observe that for any $p > 0$, 

\begin{equation}
\lim_{x\to \infty} \frac{\log(x + \sqrt{1 + x^2})^p}{f(x)} \to 0.
\end{equation}

This proves the function $f$ satisfies no global \L{}ojasiewicz inequality, i.e $\alpha(s)$  cannot be of the form $(cs)^{\frac{1}{p}}$. Instead we will show that $f$ satisfies a \kinfPLI\ with  $\alpha(s) = \frac{1}{2}(\log(1 + s))$. First notice that $\log(x + \sqrt{1 + x^2})$ is an odd function  and $f(x)$ is an even function. Hence it suffices to show that for $x > 0$,

\begin{equation}\label{eq:log-K-inf}
    \nabla f(x) \geq \frac{1}{2}\log(1 + f(x)) \Rightarrow e^{2\nabla f(x)} \ge 1 + f(x).
\end{equation}

Using that $\log(x + \sqrt{1 + x^2}) \le x$ for $x > 0$ allows us to obtain that $f(x) \le x^2$ for $x > 0$ and we get that $1 + f(x) \le 1 + x^2$. Furthermore, $e^{2\nabla f(x)} = (x + \sqrt{1 + x^2})^2 \ge 1 + x^2$, which combining with \eqref{eq:log-K-inf} shows that $\alpha(s) = \frac{1}{2}(\log(1 + s))$ is clearly a \kinfPLI\ for the function $f$. Lastly, since $f$ is strongly convex around the minimizer $x^{\ast} = 0$ it satisfies the assumption of Theorem \ref{prop:global} and the gradient flow is contractive under the metric constructed there which further implies $x^{\ast}$ is semi-globally exponentially stable.

\section{Conclusion}

In this paper, we developed techniques for constructing global contraction metrics for gradient systems of nonconvex functions. These metrics provide convergence guarantees in the state, not only in the objective value. Specifically, when the objective is strongly convex near its global minimizer and satisfies a \kinfPLI, the gradient system is semi-globally exponentially stable, and hence exponentially stable on arbitrary compact sets. Under additional assumptions on the \kinfPLI\ and the curvature of the objective, we prove that the gradient system is globally incrementally exponentially stable, and therefore globally exponentially stable.

\bibliographystyle{IEEEtran} 
\begin{spacing}{.80}
\bibliography{ref.bib}{}
\end{spacing}

\section{Appendix}

\subsection{Proof of Theorem \ref{prop:global}}

To prove Theorem \ref{prop:global}, we first state and prove a Lemma used to interpolate between a constant and a state-dependent metric.

\begin{lemma}\label{lemma:partition-of-metrics}
Suppose $\loss$ satisfies Assumption \ref{assump:1}. Define a continuous scalar function $v : \mathbb{R} \to \mathbb{R}$ which is non-decreasing and continuously differentiable on the interval $v^{-1}(a-\epsilon, b + \epsilon)$ for some $a,b,\epsilon$. Define $X = (v \circ f)^{-1}(a,b)$ and assume that for $M(x) = e^{2v(f(x))}I$ (which is in $C^1((v \circ f)^{-1}(a-\epsilon, b + \epsilon), \mathbb{S}^n))$ we have, for the gradient flow \eqref{eq:gradient-flow}, that it satisfies
\begin{equation}
    \mathcal{L}_{M}(x) \;\le\; -\beta < 0, \; \forall x \in X,
\end{equation}
Let $X^{c}$ denote the complement of $X$. Assume further that, for all $x \in X^{c}$, one has that $\lambda_{\min}(\nabla^2 f(x)) \ge \nu > 0$. Then there exists a metric $\hat{M}(x) \in C^1(\mathbb{R}^n,\mathbb{S}^n)$ which satisfies:
\begin{equation}
    \mathcal{L}_{\hat{M}}(x) \;\le\; -\min(\beta,\nu), \; \forall x \in \mathbb{R}^n
\end{equation}
\end{lemma}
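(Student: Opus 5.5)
The plan is to build $\hat M$ as a conformal rescaling of the identity, $\hat M(x) = e^{2\hat V(x)} I$, with $\hat V = \varphi\circ v\circ \loss$. Here $\varphi$ is a scalar ``clamp'' that agrees with the identity on $[a,b]$ and is constant far outside it. Two facts drive the argument.

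\emph{First fact.} By Lemma~\ref{lemma:riemmanina-metric-with-lyapunov} with $N = I$,
\[
\mathcal{L}_{\hat M}(x) = \mathcal{L}_I(x) + \dot{\hat V}(x).
\]
By Remark~\ref{rm:lyapunov-equation} and Lemma~\ref{lemma:tightness-sysmmetric}, since the Jacobian is $-\nabla^2\loss$ and is symmetric,
\[
\mathcal{L}_I(x) = \mu_2(-\nabla^2 \loss(x)) = -\lambda_{\min}(\nabla^2 \loss(x)).
\]

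\emph{Second fact.} Along \eqref{eq:gradient-flow}, any composition $w\circ\loss$ with $w$ nondecreasing and $C^1$ has orbital derivative
\[
\dot{(w\circ \loss)}(x) = -w'(\loss(x))\,\|\nabla \loss(x)\|^2 \le 0.
\]
So a conformal factor built from a nondecreasing function of the cost can only help contraction. It never destroys the bound $-\lambda_{\min}(\nabla^2\loss)\le -\nu$ that holds on $X^c$.

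\textbf{Step 1 (construct $\varphi$).} I will pick a $C^1$, nondecreasing $\varphi:\mathbb{R}\to\mathbb{R}$ with:
\begin{itemize}
\item $\varphi(r)=r$ on $[a,b]$;
\item $\varphi$ constant on $(-\infty, a-\epsilon/2]$ and on $[b+\epsilon/2,\infty)$;
\item $0\le \varphi'\le 1$.
\end{itemize}
For example, integrate a continuous bump-type function that equals $1$ on $[a,b]$, vanishes outside $(a-\epsilon/2,b+\epsilon/2)$, and takes values in $[0,1]$. Then set $\hat V(x) := \varphi(v(\loss(x)))$ and $\hat M := e^{2\hat V}I$.

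\textbf{Step 2 (regularity).} Next I check that $\hat V\in C^1(\mathbb{R}^n)$.
\begin{itemize}
\item On the open set $(v\circ\loss)^{-1}(a-\epsilon,b+\epsilon)$, $v$ is $C^1$ by hypothesis, so $\hat V$ is $C^1$ there by the chain rule.
\item On the complement, $v(\loss(x))\notin(a-\epsilon/2,b+\epsilon/2)$ holds on an open neighborhood, by continuity of $v\circ\loss$. There $\hat V$ is locally constant, hence $C^1$.
\end{itemize}
These two open sets cover $\mathbb{R}^n$, so $\hat M\in C^1(\mathbb{R}^n,\mathbb{S}^n)$ and $\hat M$ is positive definite.

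\textbf{Step 3 (contraction estimate).} I split into two cases.
\begin{itemize}
\item \emph{On $X$.} Here $v(\loss(x))\in(a,b)$, an open interval on which $\varphi$ is the identity. So $\hat V = v\circ\loss$ near $x$ and $\hat M = M$ near $x$. Hence $\mathcal{L}_{\hat M}(x)=\mathcal{L}_M(x)\le-\beta$.
\item \emph{On $X^c$.} By the second fact, with $w=\varphi\circ v$ (nondecreasing, since both factors are, and $C^1$ where it is nonconstant),
\[
\mathcal{L}_{\hat M}(x) = -\lambda_{\min}(\nabla^2\loss(x)) - \varphi'(v(\loss))\,v'(\loss)\,\|\nabla\loss\|^2 \le -\nu .
\]
\end{itemize}
Combining the two cases gives $\mathcal{L}_{\hat M}\le-\min(\beta,\nu)$ on all of $\mathbb{R}^n$.

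\textbf{Main obstacle.} The delicate point is regularity at the edges of the window where $v$ is only assumed $C^1$. In particular, points with $v(\loss(x))$ near $a-\epsilon$ or $b+\epsilon$ could break differentiability of $\hat V$, as could points where $v(\loss(x))$ equals $a$ or $b$ exactly. The clamp $\varphi$ handles this by being exactly flat before $v$ leaves its $C^1$ window, which is why the flat regions start at $a-\epsilon/2$ and $b+\epsilon/2$ rather than at $a-\epsilon$ and $b+\epsilon$. The estimate on $X^c$ is then unaffected, since only the sign of $\varphi' v'$ matters there.
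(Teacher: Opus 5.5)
Your proposal is correct and is essentially the paper's proof. The ingredients match: a clamp $\varphi$ (the paper's $g$) equal to the identity on $[a,b]$ and constant outside a slightly larger window, the conformal metric $e^{2\varphi(v(\loss(x)))}I$, Lemma~\ref{lemma:riemmanina-metric-with-lyapunov} with $N=I$ giving $\mathcal{L}_{\hat M}=-\lambda_{\min}(\nabla^2\loss)+\dot{\hat V}$, the sign argument $\varphi' v'\ge 0$ on $X^c$, and $\hat M=M$ on the open set $X$. Two small points are in your favor: flattening $\varphi$ already at $a-\epsilon/2$ and $b+\epsilon/2$ makes the $C^1$ claim airtight, whereas the paper's clamp flattens exactly at $a-\epsilon$ and $b+\epsilon$, where $v$ is not assumed differentiable; and your argument never uses boundedness of $X$, so the paper's separate one-sided construction is only needed when $b=+\infty$ (as in its use in Theorem~\ref{prop:global}), where you would simply drop the upper flat piece.
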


\begin{proof}
    Assume first that $X$ is bounded.
    Consider a smooth non-negative function $\sigma : \mathbb{R} \rightarrow [0,1]$ satisfying,
    \begin{equation*}
    \begin{aligned}
        & \sigma(t) = 0 \quad \text{for } t \le 0 \\
        & \sigma(t) = 1  \quad \text{for } t \ge 1,
    \end{aligned}
    \end{equation*}
    and define,
    \begin{equation*}
        g'(s) = \begin{cases}
            0 & s \le  a- \epsilon\\
            \sigma\left(\frac{s - (a - \epsilon)}{\epsilon}\right) & a- \epsilon < s < a \\
            1 & a \le s \le b \\
            1 - \sigma\left(\frac{s - b}{\epsilon} \right) & s > b\\
        \end{cases} \\
    \end{equation*}
    so that the integral function $g(s)$ is constant for $s \le a-\epsilon$ and $s \ge b + \epsilon$ and satisfies $g(s) = s$ for $s \in [a,b]$. Let $\hat{M}(x) = e^{2 g(v(f(x)))}I$. We need to show that $\mathcal{L}_{\hat{M}}(x) \le -\min(\beta,\nu)$ for all $x \in \mathbb{R}^n$.  First notice that  $f$ is $\nu-$strongly convex on $(v \circ f)^{-1}[v(\mloss),a-\epsilon] \subset X^c$ and $g(v(f(x))$ is constant in that set so $\hat{M} = e^{2c}I$ for some constant $c > 0$ and $\mathcal{L}_{\hat{M}}(x) = \mathcal{L}_{I}(x) \le -\nu$. For $x \in (v \circ f)^{-1}(a-\epsilon, a]$ we apply Lemma \ref{lemma:tightness-sysmmetric} and \ref{lemma:riemmanina-metric-with-lyapunov} to get:
    \begin{equation}\label{eq:contraction-with-classK-weighting}
    \begin{aligned}    
        & \mathcal{L}_{\hat{M}}(x) = -\lambda_{\min}(\nabla^2f(x)) - \nabla g(v(f(x)))^{\top} \nabla f(x) \\
        & = -\lambda_{\min}(\nabla^2f(x)) - g'(v(f(x)))v'(f(x)) \|\nabla f(x)\|^2 \\
        & \leq -\lambda_{\min}(\nabla^2f(x)) - g'(v(f(x)))v'(f(x))\alpha(f(x) - \mloss)^2.
    \end{aligned}
    \end{equation}
    For all $x$, $v$ is non-decreasing so $v'(f(x)) \ge 0$ and also by definition $g'(v(f(x))) \ge 0 $. Therefore since $x \in (v \circ f)^{-1}(a-\epsilon, a] \subset X^c$, it still holds that $\mathcal{L}_{\hat{M}}(x) \le -\nu$. For $x \in X$ we have that $g(v(f(x))) = v(f(x))$ hence $\hat{M} = M$ and $\mathcal{L}_{\hat{M}}(x) \le -\beta$. Therefore $\mathcal{L}_{\hat{M}}(x) \le -\min(\beta,\nu)$ for all $x \in (v \circ f)^{-1}[v(\mloss),b)$.
    
    \noindent For $x \in (v \circ f)^{-1}[b, \infty) \subset X^c$ we know that $-\lambda_{\min}(\nabla^2f(x)) \leq -\nu$. Considering first $x \in (v \circ f)^{-1}[b,b+\epsilon]$, we get again by using that $g'(v(f(x))) \ge 0$, $v'(f(x)) \ge 0$ and \eqref{eq:contraction-with-classK-weighting} that $\mathcal{L}_{\hat{M}}(x) \le -\nu$. For $v(f(x)) \ge b + \epsilon$ we have that $g'(v(f(x))) = 0$ so for some $c_2 > 0$,  $g(v(f(x))) = c_2$. Therefore we have that $\hat{M}(x) = e^{2c_2}I$ and $\mathcal{L}_{\hat{M}} \le -\nu$ for $x \in (v \circ f)^{-1}[b+\epsilon,\infty)$. This finalizes the proof that $\mathcal{L}_{\hat{M}}(x) \le -\min(\beta,\nu)$ for all $x \in \mathbb{R}^n$. Since $v\circ f$ is $C^1$ on $(v\circ f)^{-1}((a-\epsilon,b+\epsilon))$, and $g$ is constant outside $(a-\epsilon,b+\epsilon)$ with $g'=0$ at the endpoints, it follows that $\hat{M}\in C^1(\mathbb{R}^n,\mathbb{S}^n)$.
    If $X$ is unbounded, the proof follows similarly, but we instead define
    \begin{equation}
    g'(s) = \begin{cases}
            0 &  s \le a -\epsilon, \\
            \sigma \left(\frac{s - (a-\epsilon)}{\epsilon} \right) & s \ge a -\epsilon. \\
        \end{cases} 
    \end{equation}
\end{proof}

We are now ready to state the proof of Theorem \ref{prop:global}.

\begin{proof}
   First we prove that there exists $M(x)$ such that $\mathcal{L}_{M}(x) \le -\nu$ on $\mathbb{R}^n \setminus G$ where $G$ is a small open connected set around $x^*$. Let $g$ be a real-valued function and define:
   \begin{equation}
       M(x) = Ie^{2g(f(x))}.
   \end{equation}
   We have by Lemma \ref{lemma:tightness-sysmmetric} and Lemma \ref{lemma:riemmanina-metric-with-lyapunov}:
   \begin{equation}
     \mathcal{L}_{M}(x) = -\lambda_{\min}(\nabla^2f(x)) - \frac{\partial g}{\partial f(x)} \|\nabla f(x)\|^2
   \end{equation}
   If $\frac{\partial g}{\partial f} \ge 0$ then by the \kinfPLI\ bound on $\loss$:
   \begin{align*}
       & \mathcal{L}_{M}(x)  \le - \lambda_{\min}(\nabla^2f(x)) - \frac{\partial g}{\partial f(x)} \alpha(f(x)-f^*)^2.
   \end{align*}
    Therefore some algebraic manipulation shows that for $\mathcal{L}_{M}(x) \leq -\nu$, it is sufficient to construct a continuously differentiable $g$ such that:
    \begin{equation}\label{eq:bound-partial-g}
        \frac{\partial g}{\partial f(x)} \geq \max \{\max_{{\{z \in f^{-1}(f(x))\}}} \frac{\nu - \lambda_{\min}(\nabla^2 f(z))}{\alpha(f(x)-f^*)^2},0\}.
    \end{equation}
   Define:
    \begin{equation}
        m(x) = \nu - \lambda_{\min}(\nabla^2 f(x)),
    \end{equation}
    \begin{equation}\label{eq:m*}
        m^*(y) = \max\{m(x): f(x) = y\},
    \end{equation}
    Since $f\in C^2$, $\lambda_{\min}(\nabla^2 f(x))$ is continuous and so is $m(x)$. Because $f$ is proper, $f^{-1}(\cdot)$ is a compact valued map and $m^*(y) = m(x_y)$ for some $x_y \in f^{-1}(y)$. Consider any sequence $y_n \to y$, we can assume without loss of generality that the sequence belongs to some compact set $[y_l, y_u]$, hence there exists a subsequence $x_n$ such that $x_n \to x$ and $m(x_n) = m^*(y_n)$. By continuity of $m(\cdot)$:
    \begin{equation}
    \begin{aligned}
        & \limsup_{y_k \to y} m^*(y_k)  = \lim_{n \to \infty} m^*(y_n) = \\ 
        & \lim_{n \to \infty} m(x_n) = m(x) \le m^*(y)
    \end{aligned}
    \end{equation}
    Therefore $m^*$ is upper semicontinuous and locally bounded above. We can then construct a piecewise linear function $m^u$ which satisfies $m^u(f(x)) > \max\{m^*(f(x)),0\}$. 
    \noindent Now let $G = f^{-1}[\mloss,\mloss+\delta)$ where $\delta$ is sufficiently small so that $\lambda_{\min}(\nabla^2 f(x)) > \nu - \varepsilon$ for all $x \in G$ and define:
    \begin{equation}
        g(s) = \begin{cases}
            \int_{f^{\ast} + \frac{\delta}{2}}^{s}  \frac{m^u(\tau)}{\alpha(\tau - \mloss)^2} d\tau, & s \ge f^{\ast} + \frac{\delta}{2} \\
            0 & s \le f^{\ast} + \frac{\delta}{2}.
        \end{cases}
    \end{equation}
    By construction $g \in C{^1}((f^{\ast} + \frac{\delta}{2},\infty),\mathbb{R})$, is non-decreasing and satisfies \eqref{eq:bound-partial-g} in $\mathbb{R}^n \setminus G$. Therefore $M(x) \in C^{1}(\mathbb{R}^n \setminus G, \mathbb{S}^n)$ and: 
    \begin{equation}
        \mathcal{L}_{M}(x) \;\le\; -\nu, \; \forall x \in \mathbb{R}^n \setminus G.
    \end{equation}
    This allows us to apply Lemma \ref{lemma:partition-of-metrics}, with $X = (g \circ f)^{-1}(g(f^* + \delta), \infty)$, to construct $\hat{M}(x) \in C^{1}(\mathbb{R}^n,\mathbb{S}^n)$ such that:
    \begin{equation}
        \mathcal{L}_{\hat{M}}(x) \;\le\; -\nu +\varepsilon, \; \forall x \in \mathbb{R}^n.
    \end{equation}    
    By construction, $M(x)$ is uniformly lower bounded since $m^u(\cdot) \ge 0$ and from the proof of Lemma \ref{lemma:partition-of-metrics}, so is $\hat{M}(x)$. So from the lower-bound estimate in the proof of Proposition \ref{prop:ies} we can derive that there exists $c > 0$ such that:
    \begin{equation}
        \|\varphi(x_0,t) - x^{\ast}\| \le c d_{\hat{M}}(x_0,x^{\ast})e^{-(\nu - \epsilon) t}
    \end{equation}
    Therefore the equilibrium is semi-globally exponentially stable.
\end{proof}

\subsection{Proof of Corollary \ref{cor:compact}}

\begin{proof}
    From the proof of Theorem \ref{prop:global}, we know that there exists a contraction metric $\hat{M} \in C^1(K,\mathbb{S}^n)$ for any compact neighborhood $K$ around $x^{\ast}$ that satisfies $\mathcal{L}_{\hat{M}}(x) \le -\nu +\epsilon$. Since $K$ is compact $\hat{M}$ is uniformly bounded above and below and by Theorem \ref{prop:ies} we know the gradient flow is IES in $K$ which implies $x^{\ast}$ is exponentially stable in $K$.
\end{proof}

\begin{remark}\label{rm:restricted-form-M}
    We won't prove it here in the interest of space, but for any compact set $K$ containing $x^{\ast}$, we can restrict the metric $\hat{M}$ to be of the form $e^{2cf(x)}I$ for some $c > 0$ to show that:
    \begin{equation}
        \mathcal{L}_{\hat{M}}(x) \;\le\; -\nu + \epsilon, \; \forall x \in K.
    \end{equation}
    The proof follows almost identically to Theorem 4.1 of \cite{GIESLConverse}. We will use this simpler $M$ to simplify the proof of Theorem \ref{prop:concavity-on-annulus}.
\end{remark}

\subsection{Proof of Theorem \ref{prop:concavity-on-annulus}}

\begin{proof}
    By Remark \ref{rm:restricted-form-M} on any compact set $K$, there exists $c > 0$, such that for $M(x) = e^{cf(x)}I$,  $\mathcal{L}_{M}(x) \le -l < 0$ for all $x \in K$. Therefore, since $S$ is bounded, $\loss$ is continuous, proper and strongly convex on $S^c$, there exists $b,c > 0$ such that the compact set $X = (cf)^{-1}([\mloss,b])$ satisfies $X \supset S$ and $\mathcal{L}_{M}(x) \leq -l < 0$ for all $x \in X$. Since $f$ is state-bounded concave on $S$, it holds that for all $x \in X^c$, $\lambda_{\min}(\nabla^2 f(x)) > m > 0$. Therefore, by the proof of Lemma \ref{lemma:partition-of-metrics} there exists a metric $\hat{M}(x) = C^{1}(\mathbb{R}^n,\mathbb{S}^n)$, which is uniformly bounded below and above, satisfying:
    \begin{equation}
        \mathcal{L}_{\hat{M}}(x) \le -\min\{m,l\} < 0, \; \forall x \in \mathbb{R}^n.
    \end{equation} 
    Hence, by Theorem \ref{prop:ies}, the gradient flow system is globally IES and $x^{\ast}$ is globally exponentially stable.
\end{proof}

\subsection{Proof of Theorem \ref{prop:above-PL-regime}}
\begin{proof}
As in Theorem \ref{prop:global}, we prove that there exists $M(x)$ such that $\mathcal{L}_{M}(x) \le -\nu$ on $\mathbb{R}^n \setminus G$ where $G$ is a small open connected set around $x^*$. Let $g$ be a real-valued function and define $M(x) = Ie^{2g(f(x))}$. By Lemma \ref{lemma:tightness-sysmmetric} and Lemma \ref{lemma:riemmanina-metric-with-lyapunov}, we obtain that a sufficient condition for contractivity to hold in $G$, given a metric $M(x) = e^{2g(f(x))}I$ and $\lambda_{\min}(\nabla^2 f(x)) > -m$, is:
\begin{equation}\label{eq:equivalence-contraction}
\begin{aligned}
 & \mathcal{L}_{M}(x) = -\lambda_{\min}(\nabla^2f(x)) - \frac{\partial g}{\partial f(x)} \|\nabla f(x)\|^2 \le -\beta \\
 & \Rightarrow \frac{\partial g}{\partial f(x)}\|\nabla f(x)\|^2 \ge \beta + m
 \end{aligned}
\end{equation}

Define, as in Theorem \ref{prop:global}, $s_0 = \inf_{x \in \mathbb{R}^n \setminus G} f(x)$,
\begin{equation}
    \psi(s) = \int_{s_0}^{s} \frac{1}{\alpha(\tau - f^*)^2} d\tau,
\end{equation}

and for some $c > 0$, let $g(f(x)) = c \; \psi(f(x)).$ By the Fundamental Theorem of Calculus and $\|\nabla f(x)\| \geq \alpha(f(x) - f^*)$:

\begin{equation}
\frac{\partial g}{\partial f(x)} \|\nabla f(x)\|^2  = 
\frac{c\|\nabla f(x)\|^2}{\alpha(f(x)-f^*)^2} \geq c.
\end{equation}

Choosing $c \ge \beta+m$ ensures \eqref{eq:equivalence-contraction} holds, and by \eqref{eq:strong-PLI}, $g(x)$ is uniformly-bounded above, which implies $M(x)$ is uniformly bounded above and below. As in the proof of Theorem \ref{prop:global}, Lemma \ref{lemma:partition-of-metrics} extends $M$ into a smooth metric $\hat{M}$ that satisfies the contractivity condition in $\mathbb{R}^n$ while keeping the property that $\hat{M}$ is uniformly bounded above and below. By Theorem \ref{prop:ies} the gradient flow system is globally IES and also $x^{\ast}$ is globally exponentially stable.
\end{proof}

\subsection{Generalization of Theorem \ref{prop:concavity-on-annulus}}

\begin{lemma}\label{lemma:forward-invariance-of-inf-contracting-ball}
    Suppose \eqref{eq:ode} has an equilibrium $x^{\ast}$ and is infinitesimally contracting, with rate $\nu$, w.r.t a norm $\| \cdot \|$ on the ball $B_{\epsilon}(x^{\ast}) = \{x \; | \; \|x - x^{\ast}\| < \epsilon \}$. Then $B_{\epsilon}(x^{\ast})$ is forward invariant.
\end{lemma}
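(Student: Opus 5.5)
The plan is to show that trajectories starting in $B_{\epsilon}(x^{\ast})$ cannot reach the boundary. The tool is the standard consequence of infinitesimal contractivity: along two solutions, the one-sided derivative of their distance is bounded by the logarithmic norm. The second solution is taken to be the equilibrium itself, $\varphi(t,x^{\ast})\equiv x^{\ast}$. The only subtlety is that the logarithmic-norm bound is available only inside the ball, so we need a continuation (first-exit time) argument.

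\textbf{Step 1.} Fix $x_0\in B_{\epsilon}(x^{\ast})$ and let $x(t)=\varphi(t,x_0)$. Define
\[
T=\sup\{\tau\ge 0 : x(t)\in B_{\epsilon}(x^{\ast})\ \forall t\in[0,\tau)\}.
\]
We have $T>0$ because the ball is open and solutions are continuous. Suppose, for contradiction, that $T<\infty$. Then $\|x(T)-x^{\ast}\|=\epsilon$, while $\|x(t)-x^{\ast}\|<\epsilon$ for $t\in[0,T)$.

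\textbf{Step 2.} For $t<T$ write $e(t)=x(t)-x^{\ast}$. Since $g(x^{\ast})=0$, the mean value theorem in integral form gives
\[
\dot e = g(x(t))-g(x^{\ast}) = \Big(\int_0^1 \nabla g\big(x^{\ast}+s\,e(t)\big)\,ds\Big) e(t).
\]
The segment from $x^{\ast}$ to $x(t)$ lies in the ball because the ball is convex. By subadditivity and positive homogeneity of $\mu$, together with a Riemann-sum or continuity argument, the averaged Jacobian $A(t)$ satisfies $\mu(A(t))\le -\nu$.

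\textbf{Step 3.} Use the standard estimate $D^+\|e(t)\|\le \mu(A(t))\|e(t)\|$, which follows from $\|e(t+h)\|\le\|I+hA(t)\|\,\|e(t)\|+o(h)$. This gives $D^+\|e(t)\|\le -\nu\|e(t)\|$ on $[0,T)$. By the comparison lemma,
\[
\|e(t)\|\le e^{-\nu t}\|e(0)\|<\epsilon e^{-\nu t}\quad \text{on } [0,T).
\]
Letting $t\to T$ and using continuity yields $\|e(T)\|\le e^{-\nu T}\|e(0)\|<\epsilon$, which contradicts Step 1. Hence $T=\infty$, and the ball is forward invariant. Solutions also exist for all forward time, since they stay bounded.

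\textbf{Main obstacle.} I expect the main difficulty to be Step 2: justifying that the logarithmic norm of the integral of Jacobians is bounded by the supremum over the segment. I would handle this by convexity of $\mu$, approximating the integral by Riemann sums and using that $\mu$ is continuous (it is Lipschitz with constant $1$ in the induced norm). An alternative is to avoid the averaged Jacobian altogether. One considers the interpolation $\gamma_s(t)=\varphi(t,x^{\ast}+s e_0)$ for $s\in[0,1]$ and shows that $\|\partial_s\gamma_s\|$ decays by the variational equation. However, that route requires all the interpolated trajectories to remain in the ball, which reintroduces the invariance issue, so the mean-value route anchored at the fixed equilibrium is cleaner.
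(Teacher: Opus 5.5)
Your proof is correct, but it takes a different route from the paper.

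\textbf{The paper's route.} The paper uses exactly the interpolation argument you set aside. It takes the family $\psi(t,r)=\varphi(t,rx+(1-r)x^{\ast})$, $r\in[0,1]$, and defines a \emph{uniform} first-exit time $t_e$: the first time any member of the family leaves $B_{\epsilon}(x^{\ast})$. This time is positive because $\psi(0,[0,1])$ is a compact subset of the open ball and the flow is continuous. On $[0,t_e)$ it applies Coppel's inequality to the variational equation $\partial_t w=\nabla g(\psi)w$ for every $r$ at once. Integrating $w$ in $r$ then gives $\|\psi(t,r)-x^{\ast}\|\le re^{-\nu t}\|x-x^{\ast}\|<\epsilon$, which contradicts the definition of $t_e$. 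So your worry that this route ``reintroduces the invariance issue'' is unfounded. The argument is not circular, because the continuation argument is run on the whole curve $\psi(t,[0,1])$ rather than on one trajectory.

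\textbf{What your route buys.} You anchor the second trajectory at the equilibrium. You then replace the deformed curve $\psi(t,[0,1])$ by the straight segment from $x^{\ast}$ to $x(t)$, which stays in the ball by convexity as long as $x(t)$ does. As a result you need a first-exit time for only one trajectory, and you never use differentiability of the flow in its initial condition. The price is the integral bound $\mu\bigl(\int_0^1 J(s)\,ds\bigr)\le\int_0^1\mu(J(s))\,ds$ for the averaged Jacobian. Your Riemann-sum argument, using subadditivity, positive homogeneity and the $1$-Lipschitz continuity of $\mu$, proves this correctly. Your Dini-derivative estimate $D^+\|e\|\le\mu(A(t))\|e\|$ and the comparison step are also sound.

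\textbf{What the paper's route buys.} It reuses the same Coppel/variational machinery that drives the proof of Theorem~\ref{thm:ies-inf-contracting-compact}. There two arbitrary non-equilibrium trajectories must be compared, and the contracting region $K^c$ is not convex. The straight segment between $\varphi(t,x_0)$ and $\varphi(t,z_0)$ may cross $K$, so your equilibrium-anchored mean-value trick is not available in that setting.
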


\begin{proof}
Suppose there exists $x \in B_{\epsilon}(x^{\ast})$ such that $\varphi(t,x) \notin B_{\epsilon}(x^{\ast})$. Let $\gamma(r)=rx+(1-r)x^{\ast}$ for $r \in [0,1]$, $\psi(t,r)=\varphi(t,\gamma(r))$ and $w(t,r)=\frac{\partial}{\partial r}\psi(t,r)$. By convexity of $B_\epsilon(x^{\ast})$ the compact set $\psi(0,[0,1]) \subset B_\epsilon(x^{\ast})$. Further using that $\varphi(t,x) \notin B_{\epsilon}(x^{\ast})$ and the continuity of solutions there exists
\begin{equation}
    t_e = \inf \{t \ge 0: \exists r \in [0,1] \textbf{ s.t } \psi(t,r) \notin B_{\epsilon}(x^{\ast})\}
\end{equation}
such that $t_e > 0$, hence $\psi(t,r) \in B_{\epsilon}(x^{\ast})$ for $t \in [0,t_e)$ and $r \in [0,1]$. Now differentiating and using the chain rule, $\frac{\partial w}{\partial t}(t,r) = \frac{\partial}{\partial r}g(\psi(t,r)) = \nabla g(\psi(t,r)) w(t,r)$. Hence by Coppel's inequality:
\begin{equation}
    \|w(t,r)\| \le \|w(0,r)\| e^{\int_{0}^{t} \mu(\nabla g(\psi(\tau,r)))\,d\tau}.
\end{equation}
Since \eqref{eq:ode} is infinitesimally contracting in $B_{\epsilon}(x^{\ast})$ with rate $\nu$, for $t \in [0,t_e)$, $\|w(t,r)\|\le e^{-\nu t}\|w(0,r)\|$. $w(0,r)=\gamma'(r)=x-x^{\ast},$ and the Fundamental Theorem of Calculus gives $\psi(t,r)- x^{\ast} = \psi(t,r)- \psi(t,0) = \int_0^r w(t,s)\,ds,$ thus for $t \in [0,t_e)$ and $r \in [0,1]$,
\begin{equation}
\begin{aligned}
& \|\psi(t,r)-x^{\ast}\|
\le \int_0^r \|w(t,s)\|\,ds \\
& \le re^{-\nu t}\|x-x^{\ast}\| < r e^{-\nu t}\epsilon.
\end{aligned}
\end{equation}
By definition of $t_e$ and continuity of solutions there exists some $r \in [0,1]$ such that $\lim_{t \to t_e} \|\psi(t,r)-x^{\ast}\| = \epsilon$, but the above shows $\lim_{t \to t_e} \|\psi(t,r)-x^{\ast}\| < \epsilon$ hence we obtain a contradiction and $B_{\epsilon}(x^{\ast})$ is forward-invariant. 
\end{proof}

\begin{theorem}\label{thm:ies-inf-contracting-compact}
Suppose \eqref{eq:ode} has a unique globally asymptotically stable equilibrium \(x^\ast\) and is infinitesimally contracting, with rate $\nu$ and w.r.t a norm $\|\cdot\|$, everywhere outside a compact set \(K\subset \mathbb{R}^n\) with \(x^\ast\notin K\). Then \eqref{eq:ode} is IES.
\end{theorem}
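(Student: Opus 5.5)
The plan is to prove the incremental bound directly from Coppel's inequality along a straight segment of initial conditions, as in the proof of Lemma \ref{lemma:forward-invariance-of-inf-contracting-ball}. The key quantity to control is how much total time any trajectory can spend inside the ``bad'' compact set $K$.

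\textbf{Step 1: set up the constants.} Since $x^\ast\notin K$ and $K$ is compact, $K$ is closed. Hence there is $\epsilon>0$ with $\overline{B_\epsilon(x^\ast)}\cap K=\emptyset$, where balls are taken in the given norm $\|\cdot\|$. The system is infinitesimally contracting on $\mathbb{R}^n\setminus K\supset B_\epsilon(x^\ast)$. By Lemma \ref{lemma:forward-invariance-of-inf-contracting-ball}, $B_\epsilon(x^\ast)$ is therefore forward invariant. Next let
\[
L:=\max_{x\in K}\mu(\nabla g(x)),
\]
which is finite because $\mu(\cdot)$ is continuous (indeed Lipschitz, $|\mu(A)-\mu(B)|\le\|A-B\|$), $\nabla g$ is continuous, and $K$ is compact. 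Then for every $x\in\mathbb{R}^n$,
\[
\mu(\nabla g(x))\le -\nu+(L+\nu)^+\mathbf{1}_K(x).
\]

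\textbf{Step 2: a uniform bound $T$ on the total time any trajectory spends in $K$.} By global asymptotic stability, each $z\in K$ has a time $T_z$ with $\varphi(T_z,z)\in B_\epsilon(x^\ast)$. Since $B_\epsilon(x^\ast)$ is open and $\varphi(T_z,\cdot)$ is continuous, the same $T_z$ works on an open neighborhood of $z$. Compactness of $K$ then gives a finite subcover, and we set $T:=\max T_{z_i}$. Now take any trajectory and let $s$ be the first time it lies in $K$. By time $s+T$ it is in $B_\epsilon(x^\ast)$, and by forward invariance it stays there forever, hence never returns to $K$. Therefore the Lebesgue measure of $\{\tau\ge0:\varphi(\tau,x)\in K\}$ is at most $T$, uniformly in $x$. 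I expect this step to be the main obstacle: it is where global asymptotic stability, forward invariance of the contracting ball, and compactness must be combined correctly. In particular one must handle trajectories that enter and leave $K$ several times before being captured, and the argument above does so.

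\textbf{Step 3: integrate along a segment.} Fix $x_0,y_0$ and let $\gamma(r)=ry_0+(1-r)x_0$, $\psi(t,r)=\varphi(t,\gamma(r))$ and $w=\partial_r\psi$. As in the proof of Lemma \ref{lemma:forward-invariance-of-inf-contracting-ball}, $w$ satisfies $\partial_t w=\nabla g(\psi)w$. Coppel's inequality gives
\[
\|w(t,r)\|\le \|y_0-x_0\|\exp\!\Big(\int_0^t\mu(\nabla g(\psi(\tau,r)))\,d\tau\Big).
\]
The integrand is measurable since it is continuous in $\tau$. Applying Step 1 and Step 2 bounds the exponent by $-\nu t+(L+\nu)^+T$. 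Then
\[
\|\varphi(t,y_0)-\varphi(t,x_0)\|\le\int_0^1\|w(t,r)\|\,dr\le e^{(L+\nu)^+T}e^{-\nu t}\|x_0-y_0\|.
\]
Finally, equivalence of norms on $\mathbb{R}^n$ converts this into the Euclidean estimate \eqref{eq:ies}. The rate is $\nu$, and the overshoot constant $c$ is $e^{(L+\nu)^+T}$ times the norm-equivalence constant. This proves IES on $\mathbb{R}^n$.
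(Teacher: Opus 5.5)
Your proposal is correct and follows the paper's proof essentially step for step. Both arguments use the same three ingredients: a ball around $x^\ast$ disjoint from $K$, made forward invariant by Lemma~\ref{lemma:forward-invariance-of-inf-contracting-ball}; a uniform capture time $T$ for initial conditions in $K$, which bounds the total time any trajectory spends in $K$; and Coppel's inequality along a segment of initial conditions, with $\max_K\mu(\nabla g)$ controlling the excursion through $K$.

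Your additions tidy up details the paper leaves implicit. The finite-subcover argument justifies the uniform $T$, which the paper only asserts. Writing $(L+\nu)^+$ in place of the paper's $M+\nu$ is needed in the edge case $\max_K\mu(\nabla g)<-\nu$. The norm-equivalence step converts the estimate back to $\|\cdot\|_2$.
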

    
\begin{proof}
Fix \(\epsilon>0\) such that $B_{\epsilon}(x^{\ast}) \subset K^c$. Since \(x^\ast\) is globally asymptotically stable and \(K\) is compact, there exists \(T>0\) such that $\varphi(T,x)\in B_\epsilon(x^\ast),\; \forall x\in K$. By the semigroup property, if \(\varphi(t_0,x)\in K\) for some \(t_0\ge 0\), then $\varphi(t_0+T,x)\in B_\epsilon(x^\ast)$. Moreover, by Lemma \ref{lemma:forward-invariance-of-inf-contracting-ball} $B_\epsilon(x^\ast)$ is forward invariant. Consequently, any trajectory can spend at most a total time $T$ inside $K$: once it enters \(K\), within time \(T\) it reaches $B_\epsilon(x^\ast)$ and then remains there forever. Now let $\gamma(r)=r x_0+(1-r)z_0$ for $r \in [0,1]$, $\psi(t,r)=\varphi(t,\gamma(r))$ and
$w(t,r)=\frac{\partial}{\partial r}\psi(t,r)$. Differentiating and using the chain rule, $\frac{\partial w}{\partial t}(t,r) = \frac{\partial}{\partial r}g(\psi(t,r)) = \nabla g(\psi(t,r)) w(t,r)$. Hence by Coppel's inequality:
\begin{equation}
    \|w(t,r)\| \le \|w(0,r)\| e^{\int_{0}^{t} \mu(\nabla g(\psi(\tau,r)))\,d\tau}.
\end{equation}
Let $M:=\max_{x\in K}\mu(\nabla g(x))$, which is finite by continuity of \(\nabla g\) and compactness of $K$. Since each trajectory \(\psi(t,r)\) spends at most time $T$ in $K$, and $\mu(\nabla g)\le -\nu$ on $K^c$:
\begin{equation}
\int_0^t \mu\!\left(\nabla g(\psi(\tau,r))\right)\,d\tau
\le MT-\nu(t-T)
=
(M+\nu)T-\nu t.
\end{equation}
Therefore, $\|w(t,r)\|\le e^{(M+\nu)T}e^{-\nu t}\|w(0,r)\|.$ $w(0,r)=\gamma'(r)=x_0-z_0,$ and the Fundamental Theorem of Calculus gives $\varphi(t,x_0)- \varphi(t,z_0) = \int_0^1 w(t,r)\,dr,$ thus
\begin{equation}
\begin{aligned}
& \|\varphi(t,x_0)-\varphi(t,z_0)\|
\le
\int_0^1 \|w(t,r)\|\,dr \\
& \le
e^{(M+\nu)T}e^{-\nu t}\|x_0-z_0\|.
\end{aligned}
\end{equation}
\end{proof}

\end{document}